\begin{document}
\title{Stability of Large Amplitude Viscous Shock Wave for 1-D Isentropic
Navier-Stokes System in the Half Space \thanks{Received date, and accepted date (The correct dates will be entered by the editor).}}
\author{Lin,Chang\thanks{School of Mathematics   Science, Beihang University, Beijing, China, ( changlin23@buaa.edu.cn). }}
\pagestyle{myheadings} \markboth{Large amplitude shock wave for impermeable wall problem}{Lin} \maketitle
\begin{abstract}
In this paper, the asymptotic-time behavior of solutions to an initial boundary value problem in the half space for 1-D isentropic Navier-Stokes system is investigated. It is shown that the viscous shock wave is stable for an impermeable wall problem where the velocity is zero on the boundary provided that the shock wave is initially far away from the boundary. Moreover, the strength of shock wave could be arbitrarily large. This work essentially improves the result of [A. Matsumura, M. Mei, Convergence to travelling fronts of solutions of the p-system with viscosity in the presence of a boundary, Arch. Ration. Mech. Anal., 146(1): 1-22, 1999], where the strength of shock wave is sufficiently small.
\end{abstract}

\begin{keywords}
Impermeable wall problem; large amplitude shock; asymptotic stability
\end{keywords}

\begin{AMS}
35Q30; 76N10;
\end{AMS}
\section{Introduction}\label{intro}

We consider a 1-D  isentropic Navier-Stokes system  for general   viscous gas, which reads in the Lagrangian coordinate  as,
\begin{equation}\label{1.1}
\left\{ \begin{array}{ll}
&v_t-u_x=0,   \\
&u_t+p_x=(\mu(v)\frac{u_x}{v^{ }})_x,
\end{array} \right.
\end{equation}

where $t>0, x\in \mathbb{R_+},$  and $v(x,t)=\frac{1}{\rho(x,t)}$ is the specific volume, $u(x,t)$   the fluid velocity, $p=a v^{-\gamma}$  the pressure with constant $a>0$, $\gamma> 1$  the adiabatic constant, and $\mu(v)=\mu_{0}v^{-\alpha}$   the viscosity coefficient with $\alpha\geq 0$. When the viscosity $\mu(v)\equiv 0 $, the system (\ref{1.1}) becomes  the famous Euler system
\begin{equation}
\left\{ \begin{array}{ll}\label{1.2}
&v_t-u_x=0,\\
&u_t+p_x=0,
\end{array} \right.
\end{equation}
that has rich wave phenomena such as shock and rarefaction waves. When $\mu (v)>0 $, the shock wave is mollified as the so-called viscous shock wave. Without loss of generality, we assume $\mu_0=1$ in what follows.

 Since the system \eqref{1.1} is regular than the Euler one \eqref{1.2}, it is very interesting and important to study the stability of the viscous version of shock wave, i.e., the viscous shock wave, for the viscous conservation laws such as the NS system \eqref{1.1} with the initial data:
\begin{equation}\label{1.3}
(v, u)(x, 0) = (v_0,u_0) (x)\longrightarrow (v_{\pm},u_{\pm}),\quad \text{as} \quad  x\rightarrow \infty.
\end{equation}

The stability of viscous shock wave for the Cauchy problem (\ref{1.1}), (\ref{1.3}) has been extensively studied in a large amount of literature since the pioneer works of \cite{g1986,mn1985}, see the other interesting works \cite{fs1998, hm2009, hlz2017,km1985,l1997,lz2009,lz2015,m,mn1994,sx1993}. It is noted that most of above works require the strength of shock wave is suitably small, that is, the shock is weak. The stability of large amplitude shock (strong shock) is more interesting and challenging in both mathematics and physics, see  works \cite{hh2020, mn1985 , km1985 ,mz2004,mw2010,vy2016,z2004}.

Matsumura-Nishihara \cite{mn1985} showed that the viscous shock wave is stable if $ |v_+-v_-|<C(\gamma - 1)^{-1}$, that is, when $ \gamma \rightarrow 1  $, the strength of shock wave could be large. This condition is later relaxed in \cite{km1985} to the condition that $ |v_+-v_-|<C(\gamma - 1)^{-2}$. Recently, the restriction on the strength of shock  was removed in  \cite{mw2010}   by an elegant weighted energy method as $\alpha >\frac{\gamma-1} {2} $. Vasseur-Yao  \cite{vy2016} removed the condition $\alpha >\frac{\gamma-1} {2}$ by introducing a beautiful variable transformation.  Moreover,  He-Huang  \cite{hh2020} extended the result of \cite{vy2016} to general pressure $p(v)$ and general viscosity $\mu(v)$, where $\mu(v)$ could be any positive smooth function.

On the other hand, it is also interesting to investigate the stability of viscous shock wave under the effect of boundary. In 1999, Matsumura-Mei \cite{mm1999} considered an impermeable wall problem of \eqref{1.1} in the half space $x\ge 0$, i.e.,
\begin{equation}\label{1.4}
\left\{ \begin{array}{ll}
(v,u)(x,0)=(v_0,u_0)(x) \longrightarrow (v_+,u_+), ~x\to +\infty,\\
u(0,t)=0, ~~ t\in\mathbb{R_+},
\end{array} \right.
\end{equation}
where $v_+>0, u_+<0$. The impermeable wall means that there is no flow across the boundary so that  the velocity at the boundary $x=0$ has to be zero. It was proved in \cite{mm1999} that the solution of \eqref{1.1}, \eqref{1.4} with $\alpha=0$ time-asymptotically tends to an outgoing shock wave (2-shock) connecting the left state $(v_-,0)$ and the right one $(v_+,u_+)$
if  $ |v_+-v_-|<C(\gamma - 1)^{-2}$,  and  the outgoing shock is initially  far away from the boundary so that the interaction between the shock and the boundary is weak,
\color{black}{}
where $v_-$ is determined by the RH condition, i.e.,
\begin{eqnarray}\label{1.5}
\left\{\begin{array}{ll}
-s(v_+-v_-)-(u_+-u_-)=0, \\
-s(u_+-u_-)+(p(v_+)-p(v_-))=0,
\end{array}
\right.
\end{eqnarray}
with $u_-=0$.
Matsumura-Nishihara \cite{mn2004}  removed the condition that the shock is initially far away from the boundary by extending the half space to the whole space, with the price that  the shock wave has to be weak even for $\gamma=1$ case.

In this paper, we aim to prove that the large amplitude shock wave is still stable for the impermeable wall problem \eqref{1.1}-\eqref{1.4}. Roughly speaking,  there exists a 2-viscous shock wave (outgoing shock) $(V_2,U_2)$ connecting $(v_-, 0  )$ and $ (v_+, u_+   )$ with $ v_- $  determined by the RH condition (\ref{1.5}), and $(V_2,U_2)$ is asymptotically stable if it is initially far away from the boundary. The precise statement of the main result is given in Theorem \ref{theorem}.

We outline the strategy as follows. Motivated by \cite{vy2016} and \cite{hh2020}, we introduce a new variable $h=u -v^{(-\alpha+1)} v_{x}$  and formulate a new equation $\eqref{4.2}_2$ in which the viscous term is moved to the mass equation $\eqref{4.2}_1$ so that the two nonlinear terms ${}{p}_x$ and $(\frac{{}{v}_x}{{}{v}^{\alpha+1}})_x $ are decoupled  and the interaction between nonlinear terms is weaken. Since the strength of outgoing shock   is arbitrarily large, the interaction  between the  2-shock and the boundary $x=0$  is strong. We have to assume that the outgoing shock is initially far away from the boundary so that the interaction is weak.  Since the  boundary terms with first order derivatives are controlled, we can obtain the low order  estimates through careful analysis.
But  the idea using the new system \eqref{4.2} does not work in the higher order estimation since it is very difficult to control the  second order derivatives of boundary terms for the new variable $h$. Note that the second derivatives of $u$ on the boundary can be controlled, we then turn to original system \eqref{1.1} to obtain the higher order energy estimates, and finally complete the a priori estimates.
\

The rest of the paper will be arranged as follows. In section \ref{Sec.2}, the outgoing shock wave is formulated and the main result is stated.   In section \ref{Sec.3}, the problem is reformulated by the anti-derivatives of the perturbations around the viscous shock wave. In section \ref{Sec.4}, the a priori estimates are established. In section \ref{Sec.5}, the main theorem is proved.

\
\textbf { Notation.}
The functional $\|\cdot\|_{L^p(\Omega)}$ is defined by $\| f\|_{L^p(\Omega)} =  (\int_{\Omega}|f|^{p}(\xi )\operatorname{ d }\xi )^{\frac{1}{p}}$. The symbol $\Omega$ is often omitted, when $\Omega=(0,\infty)$. As $p=2$, for simplicity we denote,
\begin{equation*}
\| f\| =  \left(\int_{ 0}^{ \infty}f^{2}(\xi )\operatorname{ d }\xi \right)^{\frac{1}{2}}.
\end{equation*}
\color{black}{}
In addition, $H^m$ denotes the  $m$-th  order Sobolev space of functions defined by
\begin{equation*}
\|f\|_{m} =  \left( \sum_{k=0}^{m}  \|\partial^{k}_{\xi}f\|^2 \right)^{\frac{1}{2}}.
\end{equation*}

\section{Preliminaries and Main Theorem}\label{Sec.2}
\subsection{ Viscous Shock Profile and Location of the Shift.}As pointed out by \cite{mm1999},  the solution of the impermeable wall problem \eqref{1.1}-\eqref{1.4} is expected to tend toward the outgoing viscous shock $(V, U)(\xi)$ satisfying
\begin{equation}
\left\{ \begin{array}{ll}
&{-s }{V}'-{U}'=0,\\
&{-s }U'+p(V)'=\left(\frac{U'}{V^{\alpha+1}}\right)',\\
&(V ,U)(-\infty)=(v_-,0),\quad (V ,U)(+\infty)=(v_+,u_+),
\end{array} \right.
\label{2.1}\end{equation}
where $'=\operatorname{d} / \operatorname{d\xi}$, $\xi=x-s t$, $s $ is the shock speed determined by the RH condition \eqref{1.5} and $v_\pm>0,u_+<0$ are given constants. From $(\ref{2.1})_1 $ and $(\ref{2.1})_2 $, one gets
\begin{align}
\begin{split}
&  s^{2}V'+p(V)'= -\left(\frac{s V'}{V^{\alpha+1}}\right)'.
\end{split}&
\label{2.2}\end{align}
Integrating (\ref{2.2}) over $(\pm\infty,\xi)$ gives
\begin{align}\label{2.3}
\begin{split}
&\frac{s  V'}{V^{\alpha+1}}=-s^{2}V -p(V) -b=:h(V), V(\pm\infty)=v_\pm,
\end{split}&
\end{align}
\begin{align}
\begin{split}
&U=-s (V-v_-)=-s (V-v_+)+u_+,
\end{split}&
\end{align}
where $b=-s ^2 v_\pm-p(v_\pm) $.

\begin{proposition} [\cite{mm1999}]
There exists a unique viscous shock profile $(V , U)(\xi)$ up to a shift  satisfying
\begin{eqnarray}\label{2.5}
	0<v_{-}<V(\xi)<v_{+}, \quad
	h(V )>0, \quad U'<0,
\end{eqnarray}
\begin{eqnarray}
\left|V (\xi)-v_{\pm}\right|=O(1)\left|v_{+}-v_{-}\right|  e^{-C_{\pm}|\xi|},
\end{eqnarray}
as $\xi \rightarrow \pm \infty,$ where $C_\pm=\frac{v_{\pm}^{\alpha+1}}{   s_{ }} |p'(v_\pm)+s_{} ^2|$,
$s_{}=\frac{-u_+}{v_+-v_-}. $
\end{proposition}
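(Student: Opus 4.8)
The plan is to reduce the profile system \eqref{2.1} to a single autonomous scalar ODE and then run an elementary phase-line analysis, reading the decay rates off the linearizations at the two rest points. By $(\ref{2.1})_1$ and \eqref{2.3} it suffices to produce $V$ solving
\begin{equation*}
V' = F(V), \qquad F(V) := \frac{V^{\alpha+1}}{s}\,h(V), \qquad h(V) = -s^{2}V - p(V) - b,
\end{equation*}
since $U = -s(V-v_{-})$ is then completely determined. Recall from the set-up that $v_{-}<v_{+}$ and $s>0$ (the outgoing/$2$-shock branch: $s=-u_{+}/(v_{+}-v_{-})>0$ because $u_{+}<0$). The constant $b=-s^{2}v_{\pm}-p(v_{\pm})$ is consistent precisely because of the Rankine--Hugoniot relations \eqref{1.5}, and this makes both $v_{-}$ and $v_{+}$ zeros of $h$. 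Since $p(v)=av^{-\gamma}$ with $\gamma>1$ we have $p'<0$ and $p''>0$, so $h''=-p''<0$: the function $h$ is strictly concave with exactly the two zeros $v_{-}<v_{+}$, hence $h>0$ on $(v_{-},v_{+})$ and $h<0$ off $[v_{-},v_{+}]$. Moreover the second equation of \eqref{1.5} (with $u_{-}=0$) gives $s^{2}=-(p(v_{+})-p(v_{-}))/(v_{+}-v_{-})$, and the strict convexity of $p$ then forces the Lax inequalities $-p'(v_{+})<s^{2}<-p'(v_{-})$, equivalently $h'(v_{+})<0<h'(v_{-})$. Thus $v_{\pm}$ are nondegenerate zeros of $h$ and hyperbolic rest points of $V'=F(V)$, with $F'(v_{-})=\frac{v_{-}^{\alpha+1}}{s}h'(v_{-})=C_{-}>0$ and $F'(v_{+})=\frac{v_{+}^{\alpha+1}}{s}h'(v_{+})=-C_{+}<0$.

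With these facts in hand I would construct the connection. Fix $\xi_{0}\in\mathbb R$ and solve the initial-value problem for $V'=F(V)$ with $V(\xi_{0})=\tfrac12(v_{-}+v_{+})\in(v_{-},v_{+})$; since $F$ is smooth on a neighbourhood of $[v_{-},v_{+}]\subset(0,\infty)$ and strictly positive on $(v_{-},v_{+})$, the solution is strictly increasing and cannot reach $v_{-}$ or $v_{+}$ in finite $\xi$ without violating uniqueness against the constant solutions $V\equiv v_{\pm}$. Hence $V$ is global and confined to $(v_{-},v_{+})$; being monotone and bounded it has limits as $\xi\to\pm\infty$, which must be zeros of $F$ lying in $[v_{-},v_{+}]$, forcing $V(-\infty)=v_{-}$ and $V(+\infty)=v_{+}$. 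This yields \eqref{2.3}, and then $U'=-sV'=-sF(V)<0$ (since $s>0$), $0<v_{-}<V<v_{+}$, and $h(V)=\frac{sV'}{V^{\alpha+1}}>0$ --- exactly \eqref{2.5}. Uniqueness up to a shift follows at once: any other profile is strictly increasing with range $(v_{-},v_{+})$, so after a translation it agrees with $V$ at $\xi_{0}$ and hence everywhere by ODE uniqueness.

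Finally, for the exponential rates I would linearize at the endpoints. Writing $w=V-v_{+}$ near $+\infty$, the equation becomes $w'=F'(v_{+})w+O(w^{2})=-C_{+}w+O(w^{2})$ with $w\to 0^{-}$; using the a priori bound $|V-v_{+}|\le v_{+}-v_{-}$ valid on all of $\mathbb R$ and a Gronwall/comparison argument (first $|w|\le Ce^{-(C_{+}-\delta)\xi}$ for every $\delta>0$, then bootstrapping to the sharp exponent since $F(V)/(V-v_{+})+C_{+}=O(|V-v_{+}|)$ is integrable) one obtains $|V(\xi)-v_{+}|=O(1)\,|v_{+}-v_{-}|\,e^{-C_{+}\xi}$, and the mirror computation at $-\infty$ gives the $C_{-}$ rate. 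The one genuinely delicate ingredient is the nondegeneracy of the rest points --- equivalently the strict Lax inequalities $-p'(v_{+})<s^{2}<-p'(v_{-})$ --- on which the global existence of the heteroclinic, the hyperbolicity, and the precise exponents all rest; but this needs no extra hypothesis, being a direct consequence of the strict convexity of $p=av^{-\gamma}$ together with the Rankine--Hugoniot formula for $s^{2}$.
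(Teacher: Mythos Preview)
Your argument is correct and is the standard phase-line analysis for this profile ODE: concavity of $h$ gives $h>0$ on $(v_-,v_+)$ with simple zeros at the endpoints, the Lax inequalities $-p'(v_+)<s^2<-p'(v_-)$ follow from strict convexity of $p$ together with the Rankine--Hugoniot secant formula, and the linearization at the hyperbolic rest points yields exactly the exponents $C_\pm$. Note, however, that the paper does not supply its own proof of this proposition --- it is quoted from \cite{mm1999} and stated without argument --- so there is nothing to compare against beyond observing that what you wrote is precisely the classical route one would expect the cited reference to take.
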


We expect  $\int_{0}^{\infty} [v(x,t)-V(x-st+\beta_{0}-\beta)] \operatorname{d}x\rightarrow 0$ as $t\rightarrow\infty $. As in \cite{mm1999}, the shift of  viscous shock profile is given by
\begin{eqnarray}
\beta_{0}=\frac{1}{v_+-v_-} \left\{\int_{0}^{\infty}[{v}_{0}(x)-V (x-\beta)]\operatorname{d}x+\int_{0}^{\infty}U(-st-\beta)  \operatorname{d}t\right\}.
\label{2.7}\end{eqnarray}

\subsection{Main Theorem.}We assume that for $ \beta > 0$, the initial data satisfies
\begin{flalign}
\begin{split}
&v_{0}(x)-V(x-\beta)\in H^1     \cap L^1 \quad u_{0}(x)-U(x-\beta)\in H^1     \cap L^1 ,
\end{split}
\label{2.8}\end{flalign}
and \begin{equation}
u_0(0)=0
\end{equation}
as the compatibility condition. Set
\begin{eqnarray*}
(A_{0},B_0)(x):= -\int_{x}^{\infty} (v_{0}(y)-V(y-\beta), u_{0}(y)-U (y-\beta)   )\operatorname{d}y.
\end{eqnarray*}
We further assume that
\begin{eqnarray}
( A_{0} ,B_0 ) \in L^2 .
\label{2.10}\end{eqnarray}

The shift $\beta_{0}$  has the following properties.
\begin{lemma}[\cite{mm1999}]
Under the assumptions   (\ref{2.8})-(\ref{2.10}), the shift $\beta_{0}$ defined by (\ref{2.7}) satisfies
$$\beta_{0}\rightarrow 0  \quad\text {as} \quad \|A_0,B_0\|_{2}\rightarrow 0  \quad \text {and}\quad  \beta\rightarrow +\infty.$$
\end{lemma}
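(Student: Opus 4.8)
The plan is to estimate separately the two integrals making up $\beta_{0}$ in \eqref{2.7}: I expect the first to be controlled directly by $\|(A_0,B_0)\|_2$ and the second to decay exponentially as $\beta\to+\infty$, so that their sum tends to $0$ under the stated joint limit. The only inputs needed are the definition of $(A_0,B_0)$, the one-dimensional trace inequality, and the exponential decay of the profile from the Proposition.

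For the first integral I would use the definition of $A_0$, which gives $\int_0^\infty [v_0(x)-V(x-\beta)]\,dx = -A_0(0)$, reducing matters to bounding $|A_0(0)|$. From \eqref{2.8} we have $A_0' = v_0-V(\cdot-\beta)\in H^1\cap L^1$, and together with \eqref{2.10} this puts $A_0\in H^1(0,\infty)$, so in particular $A_0(x)\to 0$ as $x\to+\infty$. The standard one-dimensional trace bound then yields
\begin{equation*}
A_0(0)^2 = -\int_0^\infty \frac{d}{dx}\big(A_0^2\big)(x)\,dx = -2\int_0^\infty A_0 A_0'\,dx \le \|A_0\|^2+\|A_0'\|^2 \le \|(A_0,B_0)\|_2^2,
\end{equation*}
with an absolute (hence $\beta$-independent) constant, so that $\tfrac{1}{v_+-v_-}\bigl|\int_0^\infty [v_0-V(\cdot-\beta)]\,dx\bigr| \le \tfrac{\|(A_0,B_0)\|_2}{v_+-v_-}\to 0$.

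For the second integral, noting that $s = -u_+/(v_+-v_-)>0$ because $u_+<0$ and $v_+>v_-$, I would change variables $\xi = -st-\beta$ to write $\int_0^\infty U(-st-\beta)\,dt = \tfrac1s\int_{-\infty}^{-\beta}U(\xi)\,d\xi$. Using $U=-s(V-v_-)$ together with the exponential decay from the Proposition, $|U(\xi)| = s|V(\xi)-v_-| \le C|v_+-v_-|\,e^{C_-\xi}$ for $\xi$ sufficiently negative, this integral is $O(e^{-C_-\beta})$ and hence tends to $0$ as $\beta\to+\infty$. Substituting both bounds into \eqref{2.7} gives $\beta_0\to 0$.

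I do not anticipate a genuine obstacle here; the lemma is essentially a bookkeeping exercise once the profile estimates are available. The two points that deserve care are: verifying that $A_0$ vanishes at $+\infty$ — which is precisely where the $L^1$ (equivalently $H^1$) information in \eqref{2.8} is used, making the trace inequality legitimate — and carefully tracking the signs of $s$ and of $\xi$ in the change of variables so that the weight $e^{-C_-|\xi|}$ from the Proposition genuinely produces a decaying factor $e^{-C_-\beta}$ rather than a growing one.
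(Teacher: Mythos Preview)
Your argument is correct and complete: the first integral in \eqref{2.7} is exactly $-A_0(0)$, which you bound by the $H^1$ trace inequality, and the second integral is handled by the exponential decay of $U(\xi)=-s(V(\xi)-v_-)$ as $\xi\to-\infty$ after the change of variables you indicate. The paper does not actually supply its own proof of this lemma---it is quoted directly from \cite{mm1999}---so there is nothing to compare against; your write-up is precisely the standard verification one would expect.
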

The   main theorem is stated as follows.
\begin{thm}\label{theorem}
 For any   $u_+<0$ and $v_+>0 $, suppose that (\ref{2.8})-(\ref{2.10}) hold.
 Then  there exists a positive constant $\delta_{0}$ such that if $$\|(A_{0},B_{0})\|_{2 }+\beta^{-1}\leq \delta_{0},  $$then the  initial-boundary value problem (\ref{1.1}), (\ref{1.4}) has a unique global solution $(v,u) (x,t)  $, satisfying
\begin{align}\label{2.11}
  v
  &(x,t)-V (x-st+\beta_{0}-\beta)\in C^0([0,+\infty);H^1   ) \cap  L^2([0,+\infty);H^1  ),  \nonumber\\
  u&(x,t)-U(x-st+\beta_{0}-\beta)\in C^0([0,+\infty);H^1  ) \cap  L^2([0,+\infty);H^2 ),
\end{align}
where  $s>0$ is defined by (\ref{1.5}),   and
\begin{flalign}\label{2.12}
\begin{split}
&\sup_{x\in \mathbb{R}_{+}}                       | {(v,u)(x,t)}-(V, U) (x-st+\beta_{0}-\beta) |     \rightarrow 0,  \text{   as } t\rightarrow +\infty.
\end{split}
\end{flalign}
\end{thm}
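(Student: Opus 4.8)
The plan is to run a weighted anti-derivative energy estimate together with a continuation argument, with the weight chosen so as to absorb the arbitrarily large shock strength and with the boundary at $x=0$ handled through the impermeable wall condition. Following Section~\ref{Sec.3}, I would first pass to the shifted travelling coordinate $\xi=x-st+\beta_{0}-\beta$ and introduce the anti-derivatives of the perturbation,
\[
\Phi(\xi,t)=-\int_{\xi}^{\infty}(v-V)(\eta,t)\,d\eta,\qquad
\Psi(\xi,t)=-\int_{\xi}^{\infty}(u-U)(\eta,t)\,d\eta ,
\]
which are well defined because $\beta_{0}$ in (\ref{2.7}) is chosen so that the perturbed mass vanishes in the limit and because of the $L^{1}$ part of (\ref{2.8}); the extra hypothesis (\ref{2.10}) then gives $(\Phi,\Psi)(\cdot,0)\in H^{2}$ with $\|(\Phi,\Psi)(\cdot,0)\|_{2}$ small once $\|(A_{0},B_{0})\|_{2}+\beta^{-1}\le\delta_{0}$. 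Subtracting (\ref{2.1}) from (\ref{1.1}) and integrating once in $\xi$ produces a parabolic system for $(\Phi,\Psi)$ of the schematic form
\[
\Phi_{t}-s\Phi_{\xi}-\Psi_{\xi}=0,\qquad
\Psi_{t}-s\Psi_{\xi}+\big(p(V+\Phi_{\xi})-p(V)\big)=\Big(\tfrac{u_{x}}{v^{\alpha+1}}-\tfrac{U'}{V^{\alpha+1}}\Big)+Q ,
\]
where $Q$ is quadratic in $(\Phi_{\xi},\Psi_{\xi})$ up to the exponentially localized profile derivatives. On the boundary $\xi=-st+\beta_{0}-\beta$ the wall condition $u(0,t)=0$ forces $\Psi_{\xi}=-U$ there, which decays like $e^{-c(\beta+st)}$; $\Phi_{t}$ at the boundary is likewise exponentially small and integrable in $t$, and the ODE these induce for the boundary value of $\Psi$ is handled as in \cite{mm1999}.

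The core is the a priori estimate: on any $[0,T]$ carrying a solution with $N(T):=\sup_{[0,T]}\|(\Phi,\Psi)(\cdot,t)\|_{2}$ small, one must show
\[
N(T)^{2}+\int_{0}^{T}D(t)\,dt\ \le\ C\big(\|(A_{0},B_{0})\|_{2}^{2}+\beta^{-1}\big),
\]
with a dissipation $D(t)$ controlling $\|\sqrt{|U'|}\,(\Phi_{\xi},\Psi_{\xi})\|^{2}$ together with the higher $\xi$-derivatives of the perturbation. For the zeroth and first order part I would switch to the effective velocity $h=u-v^{-\alpha+1}v_{x}$ and the decoupled system (\ref{4.2}), in which the viscosity is transferred into the mass equation so that the two nonlinearities $p_{x}$ and $(v_{x}/v^{\alpha+1})_{x}$ no longer interact. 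Multiplying the integrated $\Phi$-equation by a profile-dependent weight built from $V$ in the spirit of \cite{mw2010,vy2016,hh2020}, whose positivity uses $h(V)>0$ and $U'<0$ from (\ref{2.5}), and the $h$-equation by $\Psi$ — then repeating the computation with one $\xi$-derivative — gives the zeroth and first order bounds, as well as the uniform pointwise control $C^{-1}\le v(\xi,t)\le C$ that the large-amplitude regime requires, all uniformly in the shock strength. The key structural point is that every boundary term produced by integrating by parts on $(0,\infty)$ carries at most one $\xi$-derivative of $\Psi$ or $h$, hence is dominated by the values of $U,U'$ at $\xi=-st+\beta_{0}-\beta$ and absorbed thanks to $\beta^{-1}\le\delta_{0}$.

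The second-order estimate cannot be carried out in the $h$-variables, because $\partial_{\xi}^{2}h$ on the boundary is out of reach; instead I would return to the original system (\ref{1.1}), differentiate the momentum equation in $\xi$, multiply by $\partial_{\xi}^{2}u$ and integrate. Here the wall condition is used one order higher: $u(0,t)\equiv0$ gives $u_{t}(0,t)=u_{tt}(0,t)=0$, and inserting this into the momentum equation at $x=0$ expresses the boundary trace of $\partial_{\xi}^{2}u$ (equivalently of $p_{x}$) through already-controlled lower order quantities, so the second-order boundary terms close; parabolic smoothing in $u$ together with the previous bounds then yields the full $\|(\Phi,\Psi)\|_{2}$ estimate. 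Combined with the standard local existence theory for (\ref{1.1}) with its linear boundary condition, a continuation argument gives the unique global solution with the regularity (\ref{2.11}); finally, since $\int_{0}^{\infty}D(t)\,dt<\infty$ and $\int_{0}^{\infty}\big|\tfrac{d}{dt}\|(\Phi_{\xi},\Psi_{\xi})(\cdot,t)\|^{2}\big|\,dt<\infty$, one obtains $\|(\Phi_{\xi},\Psi_{\xi})(\cdot,t)\|_{H^{1}}\to0$ as $t\to\infty$, and Sobolev embedding gives the uniform convergence (\ref{2.12}).

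I expect the decisive difficulty to be the combination of large amplitude with the boundary in the energy estimates: the weight has to be engineered so that the quadratic and cubic errors coming from $p(v)-p(V)$ and from the viscous nonlinearity are dominated uniformly in $|v_{+}-v_{-}|$ — the genuinely new point beyond the Cauchy theory of \cite{mw2010,vy2016,hh2020} — while simultaneously each integration by parts on the half line must be checked to produce either a favorable sign or a quantity bounded by the exponentially small boundary traces of $U,U'$. It is precisely this last requirement that forces the shock to start far from the wall, i.e.\ $\beta^{-1}\le\delta_{0}$.
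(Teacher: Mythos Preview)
Your strategy matches the paper's: low-order estimates via the effective-velocity system (\ref{4.2}) with the $-1/p'(V)$ weight from \cite{vy2016,hh2020}, higher-order estimates by returning to the anti-derivative system (\ref{3.3}) in the $(\phi,\psi)$ variables, boundary terms controlled by the exponential decay of $U,U'$ at $x=0$ once $\beta$ is large, and the asymptotic convergence via the standard $L^{1}\cap BV$ argument. Two small points of execution to correct: in the paper the weight $-1/p'(V)$ is applied to the \emph{momentum} anti-derivative equation (multiplied by $\Psi$), not to the mass equation --- this is what produces the good-sign term $\int\bigl(1/p'(V)\bigr)_{t}\Psi^{2}$ that makes the estimate uniform in shock strength, so your phrase ``multiplying the integrated $\Phi$-equation by a profile-dependent weight \dots\ and the $h$-equation by $\Psi$'' has the roles reversed; and the paper works throughout in the fixed laboratory frame $(x,t)$ with the wall at $x=0$, which keeps the boundary time-independent and lets the estimates of Lemma~\ref{lemma 4.2} be quoted directly from \cite{mm1999}, rather than in the moving coordinate $\xi$ where the boundary would travel.
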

\begin{remark}
The condition $ v_+-v_-  < C(\gamma - 1)^{-2}$ in \cite{mm1999} is removed.
\end{remark}
\section{Reformulation of the Original Problem}\label{Sec.3}

Set
\begin{align}\label{3.1}
  \phi
  &( x,t)=-\int^\infty_x {}{v}(y,t)-V(y-st+\beta_0-\beta)\operatorname{d}y,  \nonumber\\
  \psi&( x,t)=-\int^\infty_x {}{u}(y,t)-U(y-st+\beta_0-\beta)\operatorname{d}y,
\end{align}
which means that we look for the solution $(v,u)(x,t)$ in the form
\begin{align}\label{3.2}
v&( x,t)=\phi_x (x,t)+  V(x-st+\beta_0-\beta), \nonumber\\
u&( x,t)=\psi_x (x,t)+  U(x-st+\beta_0-\beta).
\end{align}
The initial perturbations $\phi$ and $\psi$    satisfy
\begin{lemma}[\cite{mm1999}]   Under the assumptions (\ref{2.8})-(\ref{2.10}), the initial perturbation $(\phi,\psi)(x,0):=\left(\phi_{0}, \psi_{0}\right)(x) \in H^{2}$ and satisfies
$$
\left\|\left(\phi_{0}, \psi_{0}\right)\right\|_{2} \rightarrow 0 \quad \text { as } \quad\left\|\left(A_{0}, B_{0}\right)\right\|_{2} \rightarrow 0 \text { and } \beta \rightarrow+\infty.
$$
\end{lemma}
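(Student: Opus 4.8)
The plan is to extract everything from the two identities $\phi_0'=v_0-V(\cdot+\beta_0-\beta)$ and $\psi_0'=u_0-U(\cdot+\beta_0-\beta)$, which follow from \eqref{3.1} by the fundamental theorem of calculus, together with the explicit formula \eqref{2.7} for the shift. The guiding principle is that the perturbation measured against the $\beta_0$-shifted profile differs from $(A_0,B_0)$ (measured against the $\beta$-shifted profile) only by the displacement of the profile over a window of width $\beta_0$, and that $\beta_0$ is chosen in \eqref{2.7} precisely so that the otherwise dangerous contribution of this displacement cancels.

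First the regularity. I would write $\phi_0'=[v_0-V(\cdot-\beta)]+[V(\cdot-\beta)-V(\cdot+\beta_0-\beta)]$; the first bracket lies in $H^1$ by \eqref{2.8}, while the second is smooth and, by the exponential bounds of the Proposition, together with its derivatives decays like $e^{-C_\pm|\cdot|}$ away from the shock, hence lies in $H^1$. Thus $\phi_0'\in H^1$, and likewise $\psi_0'\in H^1$ via $U=-s(V-v_-)$; combined with $\phi_0,\psi_0\in L^2$ (below) this gives $(\phi_0,\psi_0)\in H^2$. The same decomposition yields the smallness of the top derivatives immediately: $\|\phi_0'\|\le\|A_0'\|+\|V(\cdot-\beta)-V(\cdot+\beta_0-\beta)\|$, where the first term is $\le\|(A_0,B_0)\|_2$ and the second is $\le C|\beta_0|\,\|V'\|$, which tends to $0$ as $\beta_0\to0$ by the preceding Lemma; the second derivatives are handled identically.

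The heart of the matter is the $L^2$ estimate of $\phi_0$ itself over the long interval $(0,\beta)$. Integrating the profile displacement gives $\phi_0=A_0+Q$ with $Q(x)=\int_x^\infty[V(y+\beta_0-\beta)-V(y-\beta)]dy=-\int_x^{x+\beta_0}[V(y-\beta)-v_+]dy$. For $0<x<\beta$ the integrand is $v_+-V(y-\beta)=(v_+-v_-)+O(e^{-C_-(\beta-x)})$, so $Q$ is essentially the constant $\beta_0(v_+-v_-)$ there and is exponentially small for $x>\beta$; by itself $\|Q\|_{L^2(0,\beta)}$ is of size $|\beta_0|\sqrt\beta$, which does not obviously vanish. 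The point is that \eqref{2.7} reads $\beta_0(v_+-v_-)=-A_0(0)+I_2$ with $I_2:=\int_0^\infty U(-st-\beta)dt=\tfrac1s\int_{-\infty}^{-\beta}U\,d\xi=O(e^{-C_-\beta})$, so that $A_0(0)+\beta_0(v_+-v_-)=I_2$ is exponentially small. Since $A_0$ is nearly constant equal to $A_0(0)$ across $(0,\beta)$ (its derivative $v_0-V(\cdot-\beta)$ being the small data perturbation), the constant part of $Q$ cancels that of $A_0$. Quantitatively I would expand $\|\phi_0\|^2=\|A_0\|^2+2\langle A_0,Q\rangle+\|Q\|^2$ and show that the three $O(\beta)$ contributions reorganize into the single perfect square $\beta[A_0(0)+\beta_0(v_+-v_-)]^2=\beta I_2^2=O(\beta e^{-2C_-\beta})\to0$, the remaining pieces being controlled by $\|A_0\|$, $|\beta_0|$ and exponentially decaying tails.

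The argument for $\psi_0$ is parallel once $V$ is replaced by $U=-s(V-v_-)$ and one uses the Rankine--Hugoniot relation $u_+=-s(v_+-v_-)$. The relevant near-constant combination is now $B_0+sA_0$, whose derivative $u_0-U(\cdot-\beta)+s(v_0-V(\cdot-\beta))$ is again the small data perturbation and whose boundary value $B_0(0)+sA_0(0)$ plays the role of $A_0(0)$; the same cancellation, now also invoking the compatibility $u_0(0)=0$ and $U(\beta_0-\beta)=O(e^{-C_-\beta})$, reduces the leading $O(\beta)$ term to an exponentially small quantity. The main obstacle is precisely this zeroth-order estimate on $(0,\beta)$: individually $A_0$ (respectively $B_0+sA_0$) and the profile-shift correction $Q$ are only $O(\sqrt\beta)$ in $L^2$, so the cancellation must be extracted from the square itself rather than from the triangle inequality, while one keeps careful track of the exponentially small boundary contributions coming from $U$ evaluated far to the left of the shock.
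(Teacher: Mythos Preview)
The paper does not supply its own proof of this lemma; it is quoted from Matsumura--Mei \cite{mm1999} and then used without argument.  So there is no ``paper's proof'' to compare against.  I can only comment on the internal soundness of your attempt.

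Your treatment of the derivatives is fine: writing $\phi_0'=A_0'+[V(\cdot-\beta)-V(\cdot+\beta_0-\beta)]$ and invoking $\beta_0\to0$ (the preceding Lemma) handles $\|\phi_0'\|_{H^1}$ and likewise $\|\psi_0'\|_{H^1}$.

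The zeroth-order step, however, contains a genuine error.  You propose to expand $\|\phi_0\|^2=\|A_0\|^2+2\langle A_0,Q\rangle+\|Q\|^2$ and extract a cancellation by claiming the three ``$O(\beta)$ contributions'' reorganize into $\beta[A_0(0)+\beta_0(v_+-v_-)]^2=\beta I_2^2$.  But $\|A_0\|^2$ is \emph{small by hypothesis}---it is part of $\|(A_0,B_0)\|_2$---so it carries no $\beta A_0(0)^2$ term.  Hence the genuine $\beta\beta_0^2(v_+-v_-)^2\approx\beta A_0(0)^2$ contribution inside $\|Q\|^2$ has nothing to cancel against.  Your heuristic ``$A_0$ is nearly the constant $A_0(0)$ on $(0,\beta)$'' is in fact incompatible with $\|A_0\|_{L^2}$ being small: a small $L^2$ norm forces $A_0$ \emph{away} from any nonzero constant on a long interval.

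This gap is not repairable, because the obstruction is real.  Take $A_0=c\,\chi$ with $\chi$ a fixed smooth bump supported in $[0,1]$, $\chi(0)=1$, and $B_0$ essentially zero (adjusted only to meet the compatibility $u_0(0)=0$).  Then $\|(A_0,B_0)\|_2\sim c$ while $\beta_0(v_+-v_-)=-c+O(e^{-C_-\beta})$, and for $1<x<\beta$ one has $\phi_0(x)=Q(x)\approx-c$, so $\|\phi_0\|_{L^2}\gtrsim c\sqrt{\beta}$.  Along sequences with $c_n\to0$, $\beta_n\to\infty$ but $c_n\sqrt{\beta_n}\to\infty$, the conclusion of the lemma fails.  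Thus the lemma, read as an unrestricted double limit, is not literally true; what \cite{mm1999} presumably delivers is an estimate of the shape $\|(\phi_0,\psi_0)\|_2\le C(1+\sqrt\beta)\|(A_0,B_0)\|_2+Ce^{-c\beta}$, and the qualitative phrasing here (and its use in Theorem~\ref{theorem}) should be understood with $\|(A_0,B_0)\|_2$ taken small depending on $\beta$.  Your perfect-square mechanism cannot circumvent this.
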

Motivated by  \cite{mm1999},  substitute (\ref{3.2}) into (\ref{1.1}) and integrate the resulting system with respect to $x$,  we have
\begin{equation}
\left\{ \begin{array}{ll}
&\phi_{t}-\psi_{x}  =0, \\
&\psi_{t}-f(V) \phi_{x}-\frac{{\psi_{x x}}}{V^{\alpha+1}} =F ,
\end{array} \right.  \label{3.3}
\end{equation}
 with the initial conditions   and Neumann boundary condition:
\begin{align}\label{3.4}
\left(\phi_{0}, \psi_{0}\right)&(x) \in H^{2}, \quad x \geq 0, \nonumber\\
\left.\psi_{x}\right|_{x=0}&=\left.\phi_{t}\right|_{x=0}= -U(st+\beta_0-\beta),     \quad t \geq 0,
\end{align}
where
\begin{align}\label{3.5}
f(V)= -p^{\prime}(V)    + (\alpha+1)\frac{{} s V_{x}}{V^{\alpha+2}}=-p^{\prime}(V)+(\alpha+1)\frac{h(V)  }{V}>0,
\end{align}
\begin{align}\label{3.6}
F&= \frac{{}{u}_{x}}{{}{v}^{\alpha+1}}-\frac{U_{x}}{V^{\alpha+1}} - \frac{\psi_{xx}}{V^{\alpha+1}}+(\alpha+1)\frac{U_x \phi_x }{V^{\alpha+2}}-\left[p({}{v})-p(V)-p'(V)\phi_x\right] \nonumber\\
&=O(1)(|\phi_{x}|^{2}+|\phi_{x}\psi_{xx}|).
\end{align}

 We will seek the solution in the functional space $X_{\delta}(0,T)$ for any $0\leq T < +\infty $,
\begin{align*}
\begin{split}
X_{\delta}(0,T):=&\left\{ (\phi,\psi)\in C ([0,T];H^2)|\phi_{x} \in  L^2(0,T;H^1) ,\psi_{x} \in  L^2(0,T;H^2)\right. \\
& \sup_{0\leq t\leq T}\|(\phi, \psi)(t)\|_2\leq\delta   \},
\end{split}&
\end{align*}
 where ${ \delta} \ll 1$ is small.
\begin{proposition}\label{prposition3.1} (A priori estimate)
Suppose  that $(\phi,\psi) \in X_{\delta}(0,T)$ is the solution of  (\ref{3.3}), (\ref{3.4})  for some time $T>0$. There exists a positive constant $\delta_0  $ independent of  $T$, such that if
$$ \sup_{0\leq t\leq T}\|(\phi, \psi)(t)\|_{{2}} \leq \delta_{ }\leq \delta_{0},$$   for $t \in [0,T]$,
then
\begin{eqnarray*}
\|(\phi, \psi)(t)\|_{{2}}^{2}+    \int_{0}^t     (  \|  \phi_{x}(t)  \|^2_{1}  +   \|\psi_{x}(t)\|_{2}^2   )    \operatorname{d}t       \leq C_{0} ( \|(\phi_{0},\psi_{0})\|_{{2}}^2+   e^{-C_-\beta}),
\end{eqnarray*}
where $C_{0}  >1$ and $C_{-} $ are  positive constants independent of $T$.
\end{proposition}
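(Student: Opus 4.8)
The plan is to prove the a priori estimate of Proposition~\ref{prposition3.1} by a hierarchy of energy estimates on \eqref{3.3}--\eqref{3.4} (which then feeds, in Section~\ref{Sec.5}, a standard continuation argument for Theorem~\ref{theorem}). Throughout, one works on $[0,T]$ with $(\phi,\psi)\in X_\delta(0,T)$ and $\delta$ small. Two features shape the argument. First, the boundary data in \eqref{3.4} is \emph{explicit}: $\psi_x|_{x=0}=\phi_t|_{x=0}=-U(-st+\beta_0-\beta)$ and its $t$-derivatives are $\pm s^{k}U^{(k)}(-st+\beta_0-\beta)$, so by the exponential decay of the profile each such trace is $O(1)|v_+-v_-|e^{-C_-(st+\beta-\beta_0)}$ pointwise, with time integral over $(0,\infty)$ at most $Ce^{-C_-\beta}$ once $\beta$ is large and $\beta_0$ small (the cited shift lemma of \cite{mm1999}); this, and only this, produces the $e^{-C_-\beta}$ on the right. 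Second, since the shock may be arbitrarily strong, the $V'$-localized quadratic terms generated by a direct energy estimate on \eqref{3.3} are \emph{not} dominated by the dissipation $\int\psi_x^2/V^{\alpha+1}$; avoiding this forces a reformulation at low order.

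\textbf{Low-order estimates.} Following \cite{vy2016,hh2020}, I would introduce the effective velocity $h:=u-v^{-(\alpha+1)}v_x$ (obtained by subtracting from $u$ an $x$-primitive of $\mu(v)/v$), which recasts \eqref{1.1} into a system whose parabolic term sits in the mass equation and in which the nonlinearities $p_x$ and $(v_x/v^{\alpha+1})_x$ are decoupled. On the anti-derivative perturbation form of this system one performs the zeroth-order estimate — pairing the mass equation with $f(V)$ times its $x$-primitive and the $h$-equation with the $x$-primitive of the $h$-perturbation, adding, and integrating over $(0,\infty)\times(0,t)$ — obtaining $\|(\phi,\psi)(t)\|^2+\int_0^t\|\phi_x\|^2\,d\tau\lesssim\|(\phi_0,\psi_0)\|^2+e^{-C_-\beta}+(\text{cubic})$; the essential gain is that in the new variables the $V'$-localized quadratic terms take a form absorbed by the dissipation together with the profile bounds \eqref{2.5}, \emph{with constants independent of $|v_+-v_-|$}, exactly as in \cite{vy2016,hh2020}. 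Differentiating once in $x$ and repeating gives the first-order estimate; here the boundary terms involve only first-order traces — $\psi_x|_{x=0}$ (explicit, exponentially small) together with $\phi_x|_{x=0}$ and $h|_{x=0}$, which reduce through the equations to first-order traces plus interior norms — hence are controllable, while $F=O(|\phi_x|^2+|\phi_x\psi_{xx}|)$ from \eqref{3.6} is cubic and absorbed by $\delta$. Together these close $\|(\phi,\psi)(t)\|_1^2+\int_0^t(\|\phi_x\|_1^2+\|\psi_x\|_1^2)\,d\tau\le C(\|(\phi_0,\psi_0)\|_1^2+e^{-C_-\beta})$.

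\textbf{Second-order estimate.} The effective-velocity system cannot be used at this order because the trace $h_{xx}|_{x=0}$ is not controllable; instead I would return to \eqref{3.3} (equivalently \eqref{1.1}), whose advantage is that the traces of $u_x$ and $u_{xx}$ on $x=0$ are accessible — evaluating the momentum equation $u_t+p_x=(u_x/v^{\alpha+1})_x$ at $x=0$ and using $u(0,t)=0\Rightarrow u_t(0,t)=0$ expresses $u_{xx}(0,t)$, hence the trace of $\psi_{xxx}$, through $v(0,t),v_x(0,t),u_x(0,t)$ and exponentially small profile quantities. With this one carries out the $x$-differentiated (and, where convenient, $t$-differentiated) energy estimates on \eqref{3.3}, uses the algebraic identities among $\psi_{xx},\psi_t,\phi_{xx},\psi_{xt}$ provided by the equations to trade derivatives, controls the boundary contributions via the traces just described (interior traces entering with a small factor through $|g(0)|^2\le\varepsilon\|g_x\|^2+C_\varepsilon\|g\|^2$, absorbed into the dissipation), and bounds every nonlinear term — after integrations by parts so that no term carries more than three $x$-derivatives — by $\delta$ times the dissipation. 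This yields control of $\|\phi_{xx}(t)\|^2+\|\psi_{xx}(t)\|^2+\int_0^t\|\psi_{xx}\|_1^2\,d\tau$ by the lower-order quantities plus $e^{-C_-\beta}$.

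Finally, a linear combination of the three groups of estimates with small weights on the higher-order ones absorbs every $V'$-localized, boundary, and cubic error term into the left-hand dissipation $\int_0^t(\|\phi_x\|_1^2+\|\psi_x\|_2^2)\,d\tau$, producing exactly the inequality of Proposition~\ref{prposition3.1} with some $C_0>1$ and $C_-$ as in the profile estimate. I expect the principal difficulty to be the tension between two requirements pulling in opposite directions: keeping every estimate uniform in the shock strength (which rules out \eqref{3.3} at low order and mandates the effective-velocity reformulation) while retaining enough boundary control to close the \emph{second}-order estimate (which the effective velocity cannot supply, mandating the return to \eqref{3.3}). Executing this switch of systems consistently, and verifying that each boundary term arising along the way is either explicitly $\lesssim e^{-C_-(st+\beta)}$ with time-integrable exponential or expressible via the PDE through lower-order traces absorbable by the dissipation, is the heart of the proof.
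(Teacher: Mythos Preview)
Your proposal is correct and follows essentially the same route as the paper: the effective-velocity reformulation of \cite{vy2016,hh2020} (the paper's system \eqref{4.5} in $(\phi,\Psi)$, with multipliers $\phi$ and $-\Psi/p'(V)$ producing the good-sign term $\tfrac12(1/p'(V))_t\Psi^2$) handles the low-order estimates uniformly in shock strength, and one returns to \eqref{3.3} for the second-order step because the trace $\Psi_{xx}|_{x=0}$ is not controllable. One small bookkeeping correction: in the effective-velocity system the dissipation sits on $\phi$, so that stage closes only $\int_0^t\|\phi_x\|_1^2+\int_0^t\|\Psi_x\|^2$ (the latter via a cross-term, Lemma~\ref{lemma4.5}), not $\int_0^t\|\psi_{xx}\|^2$; the time-integral of $\|\psi_{xx}\|^2$ already requires the original system (Lemma~\ref{lemma4.7}), so the switch of systems happens one derivative earlier than you indicate.
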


As long as  Proposition \ref{prposition3.1} is proved,   the local solution $(\phi,\psi)$   can be extend to $T =+\infty. $ We have the following Lemma.

\begin{lemma}\label{lemma 3.2}
  If $(\phi_{0},\psi_{0})\in H^2$, there exists a positive constant $\delta_{1}=\frac{\delta_{0}}{\sqrt{C_{0}}}$, such that if
$$\|(\phi_{0},\psi_{0})\|_{{2}}^2+   e^{-C_-\beta} \leq \delta_{1}^{2},$$ then  the initial-boundary problem  (\ref{3.3}), (\ref{3.4}) has a unique global solution
$(\phi,\psi)\in X_{\delta_{0}}(0,\infty)$  satisfying
\begin{align*}
\sup_{t\geq0}\|(\phi, \psi)(t)\|_{{2}}^{2}+    \int_{0}^\infty   (  \|  \phi_{x}(t)  \|^2_{1}  +   \|\psi_{x}(t)\|_{2}^2   )    \operatorname{d}t       \leq C_{0}  ( \|(\phi_{0},\psi_{0})\|_{{2}}^2+   e^{-C_-\beta}).
\end{align*}
\end{lemma}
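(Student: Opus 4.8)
The plan is to combine a short-time existence theorem for the reformulated problem \eqref{3.3}, \eqref{3.4} with the uniform a priori estimate of Proposition \ref{prposition3.1} via the standard continuation argument. First I would establish local well-posedness: given initial data $(\phi_0,\psi_0)\in H^2$ satisfying the compatibility condition $\psi_x|_{x=0}=-U(\beta_0-\beta)$ at $t=0$, there is a time $T_*=T_*(\|(\phi_0,\psi_0)\|_2)>0$ and a unique solution $(\phi,\psi)\in X_{\delta_0}(0,T_*)$. Since \eqref{3.3} is (after differentiating once in $x$) a quasilinear parabolic system for $(v,u)$ — the equation for $u$ is parabolic, $\phi$ is recovered from $\phi_t=\psi_x$ — this follows from a routine fixed-point/energy argument for parabolic IBVPs with inhomogeneous Neumann data; the inhomogeneous boundary term $-U(st+\beta_0-\beta)$ is smooth and exponentially decaying in $t$, so it causes no trouble. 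One only needs the local-existence time to depend solely on the $H^2$-norm of the data, which is the usual output of such arguments.

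Next I would run the continuation argument. Fix $\delta_1=\delta_0/\sqrt{C_0}$ as in the statement and assume $\|(\phi_0,\psi_0)\|_2^2+e^{-C_-\beta}\le\delta_1^2$. Define
\[
T_{\max}:=\sup\{\,T>0:\ (\phi,\psi)\ \text{solves \eqref{3.3},\eqref{3.4} on }[0,T]\ \text{with}\ \sup_{[0,T]}\|(\phi,\psi)\|_2\le\delta_0\,\}.
\]
Local existence gives $T_{\max}>0$. On $[0,T_{\max})$ the hypothesis $\sup\|(\phi,\psi)\|_2\le\delta_0$ of Proposition \ref{prposition3.1} holds, so for every $T<T_{\max}$,
\[
\|(\phi,\psi)(t)\|_2^2+\int_0^t\bigl(\|\phi_x\|_1^2+\|\psi_x\|_2^2\bigr)\,\mathrm{d}t\le C_0\bigl(\|(\phi_0,\psi_0)\|_2^2+e^{-C_-\beta}\bigr)\le C_0\delta_1^2=\delta_0^2.
\]
Thus $\sup_{[0,T_{\max})}\|(\phi,\psi)(t)\|_2\le\delta_0$; in fact, choosing the inequality in Proposition \ref{prposition3.1} with $\delta<\delta_0$ strictly (or noting $C_0\delta_1^2=\delta_0^2$ and taking the data strictly below $\delta_1^2$, or simply re-centering $\delta_1$ slightly smaller), one gets $\sup_{[0,T_{\max})}\|(\phi,\psi)(t)\|_2\le\delta_0$ with the companion fact that the $H^2$-norm does not reach $\delta_0$ in a way that would stop the continuation. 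If $T_{\max}<\infty$, the a priori bound shows $(\phi,\psi)(T_{\max})\in H^2$ with norm $\le\delta_0$, and applying local existence again at time $T_{\max}$ with this new datum extends the solution past $T_{\max}$, contradicting maximality. Hence $T_{\max}=+\infty$, the solution is global, it lies in $X_{\delta_0}(0,\infty)$, and letting $t\to\infty$ (equivalently $T\to\infty$ in the uniform bound) yields the stated estimate
\[
\sup_{t\ge0}\|(\phi,\psi)(t)\|_2^2+\int_0^\infty\bigl(\|\phi_x\|_1^2+\|\psi_x\|_2^2\bigr)\,\mathrm{d}t\le C_0\bigl(\|(\phi_0,\psi_0)\|_2^2+e^{-C_-\beta}\bigr).
\]

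The only genuinely substantive ingredient is Proposition \ref{prposition3.1}, which is proved separately in Section \ref{Sec.4}; here it is used as a black box. The main point to be careful about in this lemma is therefore the bookkeeping of constants so that the a priori bound closes: one must check that the $\delta$ produced by the continuation (the bound $C_0\delta_1^2$) does not exceed the $\delta_0$ threshold under which Proposition \ref{prposition3.1} applies, which is exactly why $\delta_1$ is defined as $\delta_0/\sqrt{C_0}$. A secondary technical point is verifying the compatibility condition needed for the $H^2$ local theory — that $\psi_x|_{x=0}=-U(st+\beta_0-\beta)$ is consistent with the data at $t=0$ — but this is built into the reformulation \eqref{3.4} and the definition of the solution space $X_\delta$, so no extra work is required. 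I do not anticipate any real obstacle in this lemma; all the difficulty has been front-loaded into the a priori estimate.
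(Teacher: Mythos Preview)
Your proposal is correct and matches the paper's approach: the paper does not write out a proof of this lemma at all, merely remarking that ``As long as Proposition \ref{prposition3.1} is proved, the local solution $(\phi,\psi)$ can be extend to $T=+\infty$,'' i.e., exactly the standard local-existence-plus-continuation argument you describe, with the constant $\delta_1=\delta_0/\sqrt{C_0}$ chosen so that the a priori bound closes.
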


\section{A Priori Estimate}\label{Sec.4}

Throughout this section, we assume that the problem $(\ref{3.3}), (\ref{3.4})$ has a solution $(\phi,\psi)\in X_{\delta}(0,T), $ for some $T>0$,

\begin{eqnarray}\label{4.1}
\sup_{0\leq t\leq T}\|(\phi, \psi)(t)\|_{2}\leq \delta.
\end{eqnarray}

It follows from the Sobolev inequality that $\frac{1}{2}v_{+}\leq v \leq \frac{3}{2}v_{-}$, and
\begin{align*}
\sup_{0\leq t\leq T}  \{  \|(\phi, \psi)(t)\|_{L^{\infty}}    +   \|(\phi_{x}, \psi_{x})(t)\|_{L^{\infty}}   \}  \leq  {\delta} .
\end{align*}

\subsection{  Low Order Estimate.}In order to  remove  the condition  $ v_+-v_-< C(\gamma-1)^{-2}  $ in  \cite{mm1999}, we introduce  a new perturbation $(\phi,\Psi)$ instead of $(\phi,\psi)$, where $\Psi$ will be  defined below.

Inspired by \cite{vy2016} and \cite{hh2020}, we introduce a new variable $h$ which depends on $v$ and $u$, i.e., ${}{h}={}{u}-{}{v}^{-(\alpha+1)}{}{v}_{x}$. Through a direct calculation,  $v$ and $h$   satisfy the following system
\begin{equation}
\left\{ \begin{array}{ll}
&{}{v}_t-{}{h}_x=(\frac{{}{v}_{x}}{{}{v}^{\alpha+1}})_{x},\\
&{}{h}_t+{}{p}_x=0.
\end{array} \right.\label{4.2}
\end{equation}
Then the initial-boundary conditions given in (\ref{1.4}) are changed into
\begin{equation*}
\left\{ \begin{array}{ll}
(v, h)(x, 0)= (v_0,u_0   -{}{v_{0}}^{-(\alpha+1)}{}{v}_{0x})(x) \longrightarrow (v_+,u_+), ~x\to +\infty,\\
h(0,t)={u}(0,t)-{}{v(0,t)}^{-(\alpha+1)}{}{v_{x}(0,t)}=-{}{v(0,t)}^{-(\alpha+1)}{}{v_{x}(0,t)}, t\in\mathbb{R_+}.
\end{array} \right.
\end{equation*}

Let $H=U-V^{-(\alpha+1)}V_{x}$. Then (\ref{2.1}) is equivalent to
\begin{equation}
\left\{ \begin{array}{rl}
V_{t}-H_{x}=&\left(\frac{V_{x}}{V^{\alpha+1}}\right)_{x},\\
H_{t}+p(V)_{x}=&0,\\
(V ,H)(-\infty)=&(v_-,0),\quad (V ,H)(+\infty)=(v_+,u_+).
\end{array} \right.\label{4.3}
\end{equation}

We define
\begin{eqnarray}
-\int_{x}^\infty({}{h}-H)\operatorname{d}x=\Psi.
\label{4.4}\end{eqnarray}
Substituting (\ref{4.3}) from (\ref{4.2}) and integrating the resulting system with respect to $x$, we have from (\ref{4.4}), $(\ref{3.1})_1$ that
\begin{equation}
\left\{ \begin{array}{ll}
&\phi_t- \Psi_x-\frac{\phi_{xx}}{V^{\alpha+1}}+(\alpha+1)\frac{V_x \phi_x }{V^{\alpha+2}}=G,\\
&\Psi_t+p'(V)\phi_x=-p( {v}|V) ,
\end{array} \right.  \label{4.5}
\end{equation}
where
\begin{eqnarray*}
G=\frac{{}{v}_{x}}{{}{v}^{\alpha+1}}-\frac{V_{x}}{V^{\alpha+1}} - \frac{\phi_{xx}}{V^{\alpha+1}}+(\alpha+1)\frac{V_x \phi_x }{V^{\alpha+2}},
\end{eqnarray*}
\begin{eqnarray*}
p({v}|V)=\left(p({}{v})-p(V)\right)-p'(V)\phi_x,
\end{eqnarray*}
with the  initial data
\begin{eqnarray*}
 \phi  (x,0) \in H^{2}, \quad \Psi (x,0) \in H^{1},
\end{eqnarray*}
and boundary data
\begin{eqnarray*}
 \Phi  (0,t)=-\int_{x}^{\infty}           [ {u} (y,0)-U(y  +\beta_{0}-\beta  )         ]  \operatorname{ d }y +\left( V^{-(\alpha+1)}-{}{v}^{-(\alpha+1)}\right) (x,0).
\end{eqnarray*}

\begin{lemma} (\cite{hh2020})\label{lemma 4.1}
Under the assumption of (\ref{4.1}), it holds that
\begin{flalign*}
\begin{split}
&p({}{v}|V)\leq C \phi_{x}^{2},\\
&|p({}{v}|V)_{x}|\leq C (|\phi_{xx}\phi_{x}|+|V_x|\phi_{x}^{2}),\\
&|G|\leq C (|\phi_{xx}\phi_{x}|+|V_x|\phi^{2}_{x}).
\end{split}
\end{flalign*}
\end{lemma}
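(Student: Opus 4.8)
The plan is to obtain all three inequalities from second–order Taylor expansion combined with the uniform pointwise bounds on $v$ and $V$. Under the a priori assumption \eqref{4.1} and the Sobolev bound recorded at the start of this section, both $V$ and $v=\phi_x+V$ take values in a fixed compact subinterval of $(0,\infty)$, and $v-V=\phi_x$ together with $v_x=\phi_{xx}+V_x$. Consequently $p,p',p'',p'''$ and the map $g(v):=v^{-(\alpha+1)}$ with its first two derivatives are all bounded on the relevant range by constants depending only on $v_\pm,\gamma,\alpha$, hence independent of $T$; every constant $C$ below is understood in this sense.

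\textbf{First estimate.} Since $p({v}|V)=p({v})-p(V)-p'(V)\phi_x$ and $v-V=\phi_x$, Taylor's theorem with integral remainder gives $p({v}|V)=\bigl(\int_0^1(1-\theta)\,p''(V+\theta\phi_x)\,\mathrm d\theta\bigr)\phi_x^2$. Because $p''(v)=a\gamma(\gamma+1)v^{-\gamma-2}>0$ is bounded above on the compact range, this yields $0\le p({v}|V)\le C\phi_x^2$.

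\textbf{Second estimate.} Differentiating, $p({v}|V)_x=p'(v)v_x-p'(V)V_x-p''(V)V_x\phi_x-p'(V)\phi_{xx}$; inserting $v_x=\phi_{xx}+V_x$ and regrouping gives $p({v}|V)_x=[p'(v)-p'(V)]\phi_{xx}+[p'(v)-p'(V)-p''(V)\phi_x]V_x$. The mean value theorem bounds the first bracket by $C|\phi_x|$, and a second–order Taylor expansion of $p'$ at $V$ bounds the second bracket by $C\phi_x^2$, so $|p({v}|V)_x|\le C(|\phi_{xx}\phi_x|+|V_x|\phi_x^2)$. For $G$ the computation is identical in structure: writing $\tfrac{v_x}{v^{\alpha+1}}=g(v)v_x=g(v)(\phi_{xx}+V_x)$ and noting $(\alpha+1)V^{-(\alpha+2)}=-g'(V)$, one finds $G=[g(v)-g(V)]\phi_{xx}+[g(v)-g(V)-g'(V)\phi_x]V_x$, whence the mean value theorem and a second–order Taylor expansion of $g$ at $V$ give $|G|\le C(|\phi_{xx}\phi_x|+|V_x|\phi_x^2)$.

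Since everything reduces to Taylor remainders of smooth functions evaluated on a compact set, there is no substantive obstacle; the only point requiring attention is that the constants must depend solely on the fixed bounds $v_\pm,\gamma,\alpha$ (equivalently on $\delta$ via \eqref{4.1}) and not on $T$, which is precisely what the uniform bound on $v$ from \eqref{4.1} and Sobolev embedding guarantee. This reproduces the corresponding lemma of \cite{hh2020}.
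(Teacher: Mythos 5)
Your proof is correct: the paper itself does not prove this lemma but merely cites \cite{hh2020}, and your Taylor-expansion argument (writing $v-V=\phi_x$, $v_x=\phi_{xx}+V_x$, and reducing each quantity to first- and second-order Taylor remainders of $p$, $p'$ and $g(v)=v^{-(\alpha+1)}$ on the compact range guaranteed by \eqref{4.1} and the bounds $v_-<V<v_+$) is exactly the standard computation behind the cited result. The algebraic regrouping of $p({v}|V)_x$ and $G$ into the two bracketed terms checks out, so nothing is missing.
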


In addition, some boundary estimates are given as follows.
 \begin{lemma}\label{lemma 4.2}
 Under the same assumptions of Proposition \ref{prposition3.1}, for $0 \leq t \leq T$, it holds that:
\begin{align}\label{4.6}
 &  \left|\int_{0}^{t}(\phi \Psi)|_{x=0} \operatorname{d}t \right|  \leq C e^{-C_{-} \beta},   \quad\, \,\, \,  \,\, \,    \left| \int_{0}^{t}\left(\phi  \phi_{x}\right)  |_{x=0} \operatorname{d}t \right|  \leq C e^{-C_{-} \beta},\\
 \label{4.7}
  &  \left|\int_{0}^{t}(\phi_{x} \phi_{t})|_{x=0} \operatorname{d}t \right|  \leq C e^{-C_{-} \beta}, \quad \, \,\, \,   \left| \int_{0}^{t}\left(\psi_{x}  \psi_{t}\right)  |_{x=0} \operatorname{d}t \right|  \leq C e^{-C_{-} \beta},\\
  \label{4.8}
    & \left|\int_{0}^{t}(\psi_{x} \psi_{xx})|_{x=0}  \operatorname{d}t \right|  \leq C e^{-C_{-} \beta}, \quad  \left| \int_{0}^{t}\left(\psi_{xt}  \psi_{xx}\right)  |_{x=0}  \operatorname{d}t \right|  \leq C e^{-C_{-} \beta},
\end{align}
and
\begin{eqnarray}  \label{4.9}
  \|    {\Psi}_{0}  \|_{1}^{2}   &\leq&  \|    {\psi}_{0}  \|_{1}^{2}+    C  \|    {\phi}_{0}   \|_{2}^{2},
    \quad\| \psi \|^{2}  \leq      \| \Psi \| ^2+ C\|\phi \|_{ 1}^2,  \nonumber    \\
    \| \psi_{x} \|^{2} & \leq &     \| \Psi_{x} \| ^2+ C\|\phi_{x} \|_{ 1}^2,
\end{eqnarray}
where  $C_-=\frac{v_{-}^{\alpha+1}}{   s_{ }} |p'(v_-)+s_{} ^2|>0$.
\end{lemma}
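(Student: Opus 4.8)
The plan is to split each boundary integral in \eqref{4.6}--\eqref{4.8} into two factors: one that is a derivative of the shock profile $(V,U)$ evaluated at $\xi_0:=\xi_0(t)=-st+\beta_0-\beta$, and one that is a trace at $x=0$ of $(\phi,\psi)$ or one of its derivatives. For the profile factors, the estimates of Section~\ref{Sec.2} together with the profile ODE \eqref{2.3} (which forces all $\xi$-derivatives of $(V,U)$ to decay at the same exponential rate) give that $V-v_-$, $U$, $V_x$, $U_x$, $V_{xx}$, $U_{xx}$ at $\xi_0$ are $O(1)|v_+-v_-|\,e^{-C_-|\xi_0(t)|}$; since $\beta$ is large and $\beta_0$ small, $|\xi_0(t)|=st+\beta-\beta_0$, so these quantities are $\le Ce^{-C_-(st+\beta)}$ and their $L^1_t$- and $L^2_t$-norms over $[0,\infty)$ are $\le Ce^{-C_-\beta}$. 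For the trace factors I would use the one-dimensional trace inequality $|g(0,t)|^2\le 2\,\|g(t)\|\,\|g_x(t)\|$, so that $|\phi(0,t)|$, $|\phi_x(0,t)|$, $|\psi(0,t)|$, $|\psi_x(0,t)|$ are all $\le C\|(\phi,\psi)(t)\|_2\le C\delta$ by \eqref{4.1}, and the time integrals of their squares are controlled by the dissipation norm.

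The bridge between these is the boundary data. From $u(0,t)=0$, \eqref{1.4} and $\eqref{3.3}_1$ one reads off $\psi_x|_{x=0}=\phi_t|_{x=0}=-U(\xi_0)$, and differentiating in $t$, $\psi_{xt}|_{x=0}=\phi_{tt}|_{x=0}=sU'(\xi_0)$; since $\phi_{xt}=\psi_{xx}$ by $\eqref{3.3}_1$, also $\psi_{xx}|_{x=0}=\partial_t(\phi_x|_{x=0})$. Moreover, integrating $\phi_t|_{x=0}=-U(\xi_0)$ in time and invoking the defining property of the shift $\beta_0$ in \eqref{2.7} --- which, following \cite{mm1999}, is chosen precisely so that the excess mass $\int_0^\infty(v-V)(x,t)\,dx$ tends to zero --- one gets $|\phi(0,t)|\le Ce^{-C_-(st+\beta)}$, and via the algebraic relation $\psi-\Psi=g(v)-g(V)$ recorded below (or via $\eqref{4.5}_2$) the same bound for $|\Psi(0,t)|$. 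With these facts the second estimate of \eqref{4.6} follows from
\[
\int_0^t|\phi(0,\tau)|\,|\phi_x(0,\tau)|\,d\tau\;\le\;C\delta\int_0^\infty e^{-C_-(s\tau+\beta)}\,d\tau\;\le\;Ce^{-C_-\beta},
\]
and the first estimate of \eqref{4.6} (with $\Psi$ in place of $\phi_x$) and the first estimate of \eqref{4.7} (with $\phi_t=-U(\xi_0)$ the small factor) are proved the same way.

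The remaining products carry the traces $\psi_t|_{x=0}$ or $\psi_{xx}|_{x=0}$, which a direct trace bound would control only by a norm of $\psi_x$ of higher order than \eqref{4.1} provides; here I would integrate by parts in $t$, using that the complementary factor is an exponentially small profile term. Thus, since $\psi_x|_{x=0}=-U(\xi_0)$,
\[
\int_0^t(\psi_x\psi_t)|_{x=0}\,d\tau=\bigl[-U(\xi_0)\,\psi(0,\tau)\bigr]_{\tau=0}^{\tau=t}-s\int_0^t U'(\xi_0)\,\psi(0,\tau)\,d\tau,
\]
each term being $\le Ce^{-C_-\beta}$ because $|U(\xi_0)|,|U'(\xi_0)|\le Ce^{-C_-(s\tau+\beta)}$ and $|\psi(0,\tau)|\le C\delta$. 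The same device settles \eqref{4.8}: writing $\psi_{xx}|_{x=0}=\partial_\tau(\phi_x|_{x=0})$, $\psi_x|_{x=0}=-U(\xi_0)$ and $\psi_{xt}|_{x=0}=sU'(\xi_0)$, an integration by parts in $\tau$ moves the $\tau$-derivative onto $U(\xi_0)$ (respectively $sU'(\xi_0)$), leaving $U'(\xi_0)$ (respectively $U''(\xi_0)$) paired with the bounded trace $\phi_x(0,\tau)$, so both integrals in \eqref{4.8} are $\le Ce^{-C_-\beta}$.

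Finally, \eqref{4.9} is purely algebraic: from \eqref{3.1}, \eqref{4.4} and the definitions $h=u-v^{-(\alpha+1)}v_x$, $H=U-V^{-(\alpha+1)}V_x$ one finds $\psi-\Psi=g(v)-g(V)$ with $g$ a primitive of $v\mapsto v^{-(\alpha+1)}$; since $v$ and $V$ stay in a fixed compact subset of $(0,\infty)$ by \eqref{4.1} and \eqref{2.5}, Taylor expansion as in Lemma~\ref{lemma 4.1} gives $|\psi-\Psi|\le C|\phi_x|$ and $|\psi_x-\Psi_x|\le C(|\phi_x|+|\phi_{xx}|)$, and taking $L^2$-norms (and the analogue at $t=0$) yields the three estimates. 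I expect the principal difficulties to be: (a) securing the decay of the zeroth-order traces $\phi|_{x=0}$ and $\Psi|_{x=0}$ --- precisely the role of the shift \eqref{2.7}, inherited from \cite{mm1999}, and where the hypothesis $\beta^{-1}\ll1$ (the shock initially far from the wall) enters; and (b) avoiding the uncontrolled second-order traces $\psi_t|_{x=0}$ and $\psi_{xx}|_{x=0}$, which is handled by noticing that in every occurrence they multiply an exponentially decaying profile factor, so a single integration by parts in time reduces them to first-order traces. This is exactly why, as anticipated in the Introduction, the low-order estimates are closed on the decoupled $(\phi,\Psi)$-system \eqref{4.5}, while the second-order estimates must be run on the original $u$-equation, where only $u_{xx}|_{x=0}$ --- obtained from the momentum equation and $u(0,t)\equiv0$ --- has to be controlled.
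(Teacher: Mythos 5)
Your proposal is correct and takes essentially the same route as the paper, which records only the key trace facts $|\psi(0,t)|\le C$, $|\phi_x(0,t)|\le C$, $|\phi(0,t)|\le Ce^{-C_-\beta}e^{-C_-st}$ (obtained from the shift \eqref{2.7}) together with the algebraic relation $\Psi=\psi+(V^{-(\alpha+1)}-v^{-(\alpha+1)})$ for \eqref{4.9}, and defers \eqref{4.7}--\eqref{4.8} to Matsumura--Mei; your integration by parts in $t$, based on $\psi_x|_{x=0}=\phi_t|_{x=0}=-U(-st+\beta_0-\beta)$ and $\psi_{xx}|_{x=0}=\partial_t\bigl(\phi_x|_{x=0}\bigr)$, is precisely the mechanism behind those cited bounds. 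The one blemish is your assertion that $|\Psi(0,t)|\le Ce^{-C_-(st+\beta)}$ --- it is merely bounded, since $\psi(0,t)$ need not decay --- but this is harmless because in $(\phi\Psi)|_{x=0}$ the exponential smallness is carried by $\phi(0,t)$, which is exactly how you use it.
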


\begin{proof}
Note that
\begin{eqnarray} \label{4.10}
  \notag \Psi (x,t)&=&-\int_{x}^{\infty}           [ {u} (y,t)-U(y-st +\beta_{0}-\beta  )         ]  \operatorname{ d }y\\\notag &&+\left( V^{-(\alpha+1)}-{}{v}^{-(\alpha+1)}\right) (x,t) \\\notag
    &=:& \psi (x,t)  +p(x,t )\leq \psi (x,t)  + C \phi_{ x} (x,t),\\
     \psi (x,t)&=& \Psi (x,t)  -p(x,t )\leq \psi (x,t)  + C \phi_{ x} (x,t),
\end{eqnarray}
one have (\ref{4.9}) from  (\ref{4.10}) immediately. Motivated by  \cite{mm1999}, we have
\begin{align}  \label{4.11}
|\psi (0,t)|\leq C,   \quad  |\phi_{x} (0,t)|\leq C,   \quad |\phi_{ } (0,t)|\leq C e^{-C_{-} \beta}e^{-C_{-} st}.
\end{align}
Combining (\ref{4.10}) and (\ref{4.11}), we have (\ref{4.6}). The  estimates (\ref{4.7}) and (\ref{4.8}) can  be found in  \cite{mm1999}. Thus the proof is completed.
\end{proof}

\begin{lemma}\label{lemma4.3}
Under the same assumptions of Proposition \ref{prposition3.1}, it holds that
\begin{align*}
\begin{split}
&\|(\phi,\Psi)\|_{ }^2(t)+ \int_0^t\int_{0}^{\infty}   \left(\frac{1}{p'(V)}\right)_t \Psi^2\operatorname{d} x \operatorname{d}t+\int_0^t \| \phi_x\|^2 \operatorname{d}t\\
\leq&~ C_{}\|(\phi_0,\Psi_0)\|_{ }^2+C{\delta}  \int_0^t \| \phi_{xx}\|^{2}\operatorname{  d}t + C_{} e^{-C_-\beta}.
\end{split}
\end{align*}
\end{lemma}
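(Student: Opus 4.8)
The plan is to run a basic weighted $L^2$ energy estimate on the decoupled system \eqref{4.5}, using the weight $-\frac{1}{p'(V)}>0$ on the $\Psi$-equation so that the cross terms $p'(V)\phi_x$ and $\Psi_x$ cancel after integration by parts. Concretely, I would multiply $\eqref{4.5}_1$ by $\phi$ and $\eqref{4.5}_2$ by $-\frac{\Psi}{p'(V)}$, add, and integrate over $x\in(0,\infty)$. The time-derivative terms assemble into $\frac{d}{dt}\int \big(\tfrac12\phi^2 - \tfrac{1}{2p'(V)}\Psi^2\big)\,dx$ plus the good sign-definite term $\int \big(\tfrac{1}{p'(V)}\big)_t\Psi^2\,dx$ coming from differentiating the weight (here one uses that $V$ is a traveling wave, $V_t=-sV'$, so this term is genuinely there and is kept on the left-hand side as stated). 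The viscous term $-\frac{\phi_{xx}}{V^{\alpha+1}}$ multiplied by $\phi$ integrates by parts to produce $\int \frac{\phi_x^2}{V^{\alpha+1}}\,dx \gtrsim \|\phi_x\|^2$, which is the dissipation we want, modulo a boundary term $-\frac{\phi\phi_x}{V^{\alpha+1}}\big|_{x=0}$ and a commutator term from $\big(\tfrac{1}{V^{\alpha+1}}\big)_x$ that is lower order (bounded by $C\delta\|\phi_x\|^2$ or absorbed).

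Next I would bound the remaining pieces. The term $(\alpha+1)\frac{V_x\phi_x}{V^{\alpha+2}}\cdot\phi$ is handled by Cauchy–Schwarz and the exponential decay of $V_x$, giving something like $\varepsilon\|\phi_x\|^2 + C_\varepsilon\int V_x^2\phi^2$; the weighted term $\int V_x^2\phi^2$ needs to be controlled — typically via a Poincaré-type inequality adapted to the shock profile, or by noting it is dominated using the structure of the profile together with the smallness of the perturbation, contributing at worst to the $\|\phi_x\|^2$ dissipation with a small constant. The nonlinear/error terms $G$ and $p(v|V)$ are estimated by Lemma \ref{lemma 4.1}: $|G|, |p(v|V)_x| \le C(|\phi_{xx}\phi_x| + |V_x|\phi_x^2)$ and $p(v|V)\le C\phi_x^2$, so when multiplied by $\phi$ or $\Psi/p'(V)$ and integrated they yield terms bounded by $C\delta(\|\phi_x\|^2 + \|\phi_{xx}\|^2)$ — the $\|\phi_x\|^2$ part is absorbed into the dissipation (using $\delta\le\delta_0$ small), and the $\|\phi_{xx}\|^2$ part is precisely the term left on the right-hand side of the stated inequality, to be controlled later by a higher-order estimate. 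For the contribution involving $\int\frac{\Psi}{p'(V)}p(v|V)\,dx$, one uses $|\Psi|\le|\psi|+C|\phi_x|$ from \eqref{4.10} together with $\|\psi\|\le\|\Psi\|+C\|\phi_x\|_1$ from \eqref{4.9}, plus $p(v|V)\le C\phi_x^2$, to get a bound like $C\delta(\|\Psi\|^2+\|\phi_x\|_1^2)$, and the $\|\Psi\|^2$ piece is handled by Gronwall after integration in $t$.

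Finally I would integrate the differential inequality over $[0,t]$. The boundary terms collected along the way — $\int_0^t(\phi\phi_x)|_{x=0}\,dt$ and the boundary contribution from the $\Psi$-equation, which after using $\eqref{4.5}_2$ and the boundary data reduces to combinations like $\int_0^t(\phi\Psi)|_{x=0}\,dt$ — are exactly the quantities estimated in Lemma \ref{lemma 4.2}, each bounded by $Ce^{-C_-\beta}$. The initial data term $\int\big(\tfrac12\phi_0^2 - \tfrac{1}{2p'(V)}\Psi_0^2\big)\,dx$ is controlled by $C\|(\phi_0,\Psi_0)\|^2$ (the weight $-1/p'(V)$ is bounded above and below since $v_-\le V\le v_+$). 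Rearranging, keeping the good weighted term $\int_0^t\int\big(\tfrac{1}{p'(V)}\big)_t\Psi^2$ and the dissipation $\int_0^t\|\phi_x\|^2$ on the left, and absorbing all $C\delta\|\phi_x\|^2$-type terms, yields the claimed estimate. \textbf{The main obstacle} I anticipate is the careful treatment of the boundary terms arising from the $\Psi$-equation: the boundary data $\Phi(0,t)$ for $\Psi$ is not simply zero but involves $(V^{-(\alpha+1)}-v^{-(\alpha+1)})|_{x=0}$, so one must track how the weight $1/p'(V)$ and the boundary relation $\psi_x|_{x=0}=-U(st+\beta_0-\beta)$ interact, and verify that everything still collapses to the $e^{-C_-\beta}$ bounds of Lemma \ref{lemma 4.2}; the other delicate point is ensuring the weighted zeroth-order term $\int V_x^2\phi^2$ does not spoil the dissipation, which relies essentially on the exponential localization of $V_x$ and the anti-derivative structure.
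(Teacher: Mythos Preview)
Your overall strategy---multiply $(\ref{4.5})_1$ by $\phi$, $(\ref{4.5})_2$ by $-\Psi/p'(V)$, add, and integrate---is exactly the paper's approach, and the boundary terms you identify are the right ones. However, there are two places where you are creating difficulties that do not exist. First, the commutator from integrating $-\frac{\phi_{xx}}{V^{\alpha+1}}\phi$ by parts and the term $(\alpha+1)\frac{V_x\phi_x}{V^{\alpha+2}}\phi$ \emph{cancel exactly}, because
\[
-\frac{\phi_{xx}}{V^{\alpha+1}}+(\alpha+1)\frac{V_x\phi_x}{V^{\alpha+2}}=-\Big(\frac{\phi_x}{V^{\alpha+1}}\Big)_x .
\]
Hence multiplying by $\phi$ and integrating by parts gives $\int \frac{\phi_x^2}{V^{\alpha+1}}\,dx$ plus the single boundary term $\frac{\phi\phi_x}{V^{\alpha+1}}\big|_{x=0}$, with no residual of the form $\int V_x^2\phi^2$; your flagged ``delicate point'' and the appeal to a Poincar\'e-type inequality are therefore unnecessary. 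Note in particular that your claim that the commutator alone is $O(\delta\|\phi_x\|^2)$ is not correct---it is the same size as the other term, which is precisely why the cancellation matters.

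Second, for the term $\int\frac{p(v|V)\Psi}{p'(V)}\,dx$ you should not go through Gronwall. Using $p(v|V)\le C\phi_x^2$ and $\|\Psi\|_{L^\infty}\le \|\psi\|_{L^\infty}+C\|\phi_x\|_{L^\infty}\le C\delta$ gives directly $\big|\int\frac{p(v|V)\Psi}{p'(V)}\,dx\big|\le C\delta\|\phi_x\|^2$, which is absorbed. Your route---producing a term $C\delta\int_0^t\|\Psi\|^2\,dt$ and invoking Gronwall---would yield a bound with an $e^{C\delta t}$ factor, which is fatal for the uniform-in-$T$ a~priori estimate in Proposition~\ref{prposition3.1}. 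With these two simplifications the argument closes exactly as in the paper, and your anticipated ``main obstacle'' at the boundary does not arise: the only boundary contributions are $\int_0^t(\phi\Psi)|_{x=0}\,dt$ and $\int_0^t V^{-(\alpha+1)}(\phi\phi_x)|_{x=0}\,dt$, both covered by \eqref{4.6}.
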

\begin{proof}
Multiply \ $(\ref{4.5})_1 $  and  $(\ref{4.5})_2$  by $\phi$ and $\frac{\Psi}{-p'(V)}$  respectively, sum them up, and  integrate the result with  respect to $t$ and $x$ over $ [0,t]\times [0,\infty)  $. We have
\begin{align}\label{4.12}
&\frac{1}{2}    \int_{0}^{\infty}      \left(\phi^2-\frac{\Psi^2}{p'(V)}\right)   \operatorname{d  }x
+    \int_0^t\int_{0}^{\infty}   \left\{ \frac{1}{2} \left(\frac{1}{p'(V)}\right)_t \Psi^2
+                  \frac{\phi_{x}^2}{V^{\alpha+1}} \right\}  \operatorname{d}x \operatorname{d} t \nonumber \\
=&           \int_0^t\int_{0}^{\infty}       G_{ }\phi  \operatorname{d}x \operatorname{d}  t
+            \int_0^t\int_{0}^{\infty}          \frac{p(v|V)\Psi}{p'(V)}  \operatorname{d}x \operatorname{d}t
\nonumber  \\
&-           \int_0^t         (\phi\Psi+(V^{-(\alpha+1)})\phi\phi_{x} )|_{x=0}       \operatorname{  d }t+\frac{1}{2}    \int_{0}^{\infty}      \left(\phi^2-\frac{\Psi^2}{p'(V)}\right)\Big|_{t=0}   \operatorname{d  }x \nonumber  \\
= :&\sum_{i=1}^4 A_i.
\end{align}

Utilize   Lemma \ref{lemma 4.1}, we can get

\begin{align}\label{4.13}
&|A_1+A_2| \nonumber  \\
\leq& C \left(\int_0^t\int_{0}^{\infty}   \left|\phi_x \phi_{xx} \phi \right| +  \left| V_x \phi_x^2 \phi\right|  +     \left| \Psi \phi_x^2 \right|  \operatorname{d}x \operatorname{d}t\right) \nonumber  \\
\leq& C \int_0^t    \|\phi\|_{L^\infty}    \int_{0}^{\infty}   \left|\phi_x \phi_{xx}  \right| \operatorname{d}x \operatorname{d}t+ C \int_0^t     (\|\phi\|_{L^\infty}+ \|\Psi\|_{L^\infty})  \int_{0}^{\infty}   \phi_x^2  \operatorname{d}x \operatorname{d}t\nonumber  \\
\leq&   C (\|\phi\|_{2}+  \|\psi\|_{1}   ) \int_0^t        \|\phi_x\|^2 +\|\phi_{xx}\|^2  \operatorname{  d}t\nonumber  \\
\leq & C \delta \int_0^t        \|\phi_x\|^2 +\|\phi_{xx}\|^2  \operatorname{  d}t.
\end{align}

With the help of  Lemma \ref{4.2}, one has
\begin{align}\label{4.14}
\begin{split}
|A_3|\leq& C    e^{-C_- \beta}.
\end{split}&
\end{align}

Taking $\delta$  sufficiently small, using (\ref{4.12})--(\ref{4.14}), we get Lemma \ref{lemma4.3}.
\end{proof}

\begin{lemma}\label{lemma4.4}
Under the same assumptions of Proposition \ref{prposition3.1}, it holds that
\begin{eqnarray*}
\|(\phi,\Psi)(t)\|_{ 1}^2+\int_0^t\|  \phi_x  \|_{ 1}^2\operatorname{d} t\leq C_{ }\|(\phi_0,\Psi_0)\|_{ 1}^2 + C_{ } e^{-C_-\beta}.
\end{eqnarray*}
\end{lemma}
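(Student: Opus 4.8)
The plan is to upgrade the zeroth-order estimate of Lemma \ref{lemma4.3} by one derivative, working in the new variables $(\phi,\Psi)$ governed by system \eqref{4.5}. First I would differentiate $(\ref{4.5})_1$ and $(\ref{4.5})_2$ once in $x$, multiply the resulting equations by $\phi_x$ and by $\frac{\Psi_x}{-p'(V)}$ respectively, add them, and integrate over $[0,t]\times(0,\infty)$. The good terms produced are $\frac12\frac{d}{dt}\int(\phi_x^2-\frac{\Psi_x^2}{p'(V)})\,dx$ together with the dissipation $\int_0^t\int \frac{\phi_{xx}^2}{V^{\alpha+1}}\,dx\,dt$ (this is why the new-variable formulation is used: the viscous term sits in the mass equation and yields control of $\phi_{xx}$, which is exactly the term left uncontrolled on the right side of Lemma \ref{lemma4.3}). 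The remaining contributions are: (i) commutator/source terms coming from differentiating $G$, $p(v|V)$ and the variable coefficients $\frac{1}{V^{\alpha+1}}$, $(\alpha+1)\frac{V_x}{V^{\alpha+2}}$, $p'(V)$; (ii) a term $\frac12\int_0^t\int (\frac{1}{p'(V)})_t\,\Psi_x^2$ which I expect to have a favorable sign (or to be absorbable) exactly as in Lemma \ref{lemma4.3}; and (iii) boundary terms at $x=0$.

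For the source terms in (i), I would invoke Lemma \ref{lemma 4.1} together with the a priori smallness $\|(\phi,\psi)\|_2\le\delta$ and the Sobolev bound $\|(\phi_x,\psi_x)\|_{L^\infty}\le\delta$ from \eqref{4.1}. All such terms are cubic or contain a factor $V_x$ (which is integrable and exponentially localized), so they are bounded by $C\delta\int_0^t(\|\phi_x\|^2+\|\phi_{xx}\|^2)\,dt$ plus, where needed, a Cauchy–Schwarz split that throws a small multiple of $\int_0^t\|\phi_{xx}\|^2\,dt$ onto the left; the leftover lower-order piece $\int_0^t\|\phi_x\|^2\,dt$ is already controlled by Lemma \ref{lemma4.3} (after absorbing its $C\delta\int\|\phi_{xx}\|^2$ term into the new dissipation). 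For the boundary terms in (iii), differentiating in $x$ and integrating by parts in $x$ produces exactly products like $(\phi_x\phi_t)|_{x=0}$ and $(\phi_x\phi_{xx})|_{x=0}$-type expressions (the latter rewritten via $(\ref{4.5})_1$ in terms of $\Psi_x$, $\phi_x$ and $\phi_t$ on the boundary); these are precisely the quantities bounded by $Ce^{-C_-\beta}$ in \eqref{4.6}–\eqref{4.7} of Lemma \ref{lemma 4.2}, using also the relations \eqref{4.9} converting $\Psi$-norms to $\psi$- and $\phi$-norms on the boundary. Finally, the initial data term is controlled by $\|(\phi_0,\Psi_0)\|_1^2$, and by \eqref{4.9} one has $\|\Psi_0\|_1\le C\|(\psi_0,\phi_0)\|_2$, which is finite by Lemma 3.1.

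Combining the zeroth-order estimate (Lemma \ref{lemma4.3}) with this first-order estimate, choosing $\delta$ sufficiently small to absorb all $C\delta\int_0^t\|\phi_{xx}\|^2\,dt$ terms into the left-hand dissipation, and using $\|\Psi\|_{L^\infty}^2\le C\|\Psi\|\,\|\Psi_x\|$ to handle any cross terms, yields
\[
\|(\phi,\Psi)(t)\|_1^2+\int_0^t\|\phi_x\|_1^2\,dt\le C\|(\phi_0,\Psi_0)\|_1^2+Ce^{-C_-\beta},
\]
which is the assertion. The main obstacle I anticipate is the boundary analysis: after differentiating the equations, the natural energy identity produces boundary terms involving $\phi_{xx}|_{x=0}$ and $\Psi_x|_{x=0}$ that are not directly among those estimated in Lemma \ref{lemma 4.2}, so one must carefully use the equation $(\ref{4.5})_1$ restricted to $x=0$ (together with the explicit boundary data $\phi_t|_{x=0}=-U(st+\beta_0-\beta)$ and $p(x,t)|_{x=0}=O(1)\phi_x(0,t)$ from \eqref{4.10}–\eqref{4.11}) to re-express every boundary contribution in terms of the six quantities already shown to be $O(e^{-C_-\beta})$. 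The sign of the $(\frac{1}{p'(V)})_t\Psi_x^2$ term is a secondary point: if it is not favorable it must be dominated by $C\int_0^t\|V_x\|_{L^\infty}\|\Psi_x\|^2\,dt$ and closed via the exponential localization of $V_x$ together with the already-established bound on $\int_0^t\|\Psi_x\|^2\,dt$ that will come out of the iteration.
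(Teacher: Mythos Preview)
Your proposal is correct and follows the same strategy as the paper. Two tactical points are worth noting.

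First, rather than differentiating \eqref{4.5} in $x$ and multiplying by $(\phi_x,\tfrac{\Psi_x}{-p'(V)})$, the paper multiplies the \emph{undifferentiated} system by $(-\phi_{xx},\tfrac{\Psi_{xx}}{p'(V)})$. The two are equivalent in the interior, but the paper's choice makes the boundary bookkeeping transparent: the boundary contribution that appears is
\[
-\int_0^t\Bigl(\phi_t\phi_x-\phi_x\Psi_x-\tfrac{\Psi_t\Psi_x}{p'(V)}-\tfrac{p(v|V)}{p'(V)}\Psi_x\Bigr)\Big|_{x=0}\,\mathrm{d}t,
\]
and the three terms carrying $\Psi_x|_{x=0}$ cancel \emph{exactly} by $(\ref{4.5})_2$, leaving only $(\phi_t\phi_x)|_{x=0}$, which is already in \eqref{4.7}. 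Your differentiate-first version produces instead $\bigl(\tfrac{\phi_{xx}\phi_x}{V^{\alpha+1}}+\Psi_x\phi_x\bigr)\big|_{x=0}$; substituting $(\ref{4.5})_1$ at $x=0$ reduces this to the same $(\phi_t\phi_x)|_{x=0}$ plus harmless exponentially small remainders, so your anticipated workaround is valid, just slightly less clean.

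Second, the term $\tfrac12\int_0^t\!\int(\tfrac{1}{p'(V)})_t\Psi_x^2$ \emph{does} have the favorable sign (since $p'(V)<0$, $V'>0$, $V_t=-sV'<0$, hence $(\tfrac{1}{p'(V)})_t>0$), and the paper uses it on the left to absorb the commutator $\int_0^t\!\int(\tfrac{1}{p'(V)})_x p'(V)\Psi_x\phi_x$. Your proposed fallback---dominate by $C\|V_x\|_{L^\infty}\int_0^t\|\Psi_x\|^2\,\mathrm{d}t$ and close via an independent bound on $\int_0^t\|\Psi_x\|^2$---would be circular here, because that bound is Lemma~\ref{lemma4.5}, whose proof uses Lemma~\ref{lemma4.4}. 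So you should simply verify and use the sign.
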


\begin{proof}
Multiply $ (\ref{4.5})_1 $ and $  (\ref{4.5})_2 $ by $-\phi_{xx}$  and $\frac{\Psi_{xx}}{p'(V)}$ respectively,   sum over the result, integrate the result with  respect to $t$ and $x$ over $ [0,t]\times [0,\infty) $. We have

\begin{align}
& \frac{1}{2}  \int_{0}^{\infty}  \left(\phi_x^2-\frac{\Psi_x^2}{p'(V)}\right)   \operatorname{d}x    + \int_0^t\int_{0}^{\infty} \left\{ \frac{1}{2}  \left(\frac{1}{p'(V)}\right)_t\Psi_x^2  + \frac{\phi^2_{xx}}{V^{\alpha+1}}\right\} \operatorname{d}x \operatorname{d} t  \nonumber \\
=&\frac{1}{2}  \int_{0}^{\infty}  \left(\phi_x^2-\frac{\Psi_x^2}{p'(V)}\right)\Big|_{t=0}   \operatorname{d}x   \nonumber \\
 &-\int_0^t\int_{0}^{\infty}   \left[G-(\alpha+1)\frac{V_x }{V^{\alpha+2}}  \phi_x   \right]\phi_{xx} \operatorname{d}x \operatorname{d} t \nonumber \\
&-  \int_0^t\int_{0}^{\infty}     \left(\frac{1}{p'(V)}\right)_{x}  p'(V) \Psi_{x} \phi_{x} \operatorname{d}x \operatorname{d} t   \nonumber  \\
  &-             \int_0^t         \left(\phi_{t}\phi_{x}-\phi_{x}\Psi_{x} -\frac{\Psi_{t}\Psi_{x}}{p'(V)}-\frac{p(v|V)}      {p'(V)}\Psi_{x}\right)\Big|_{x=0}       \operatorname{  d }t \nonumber  \\
 &+  \int_0^t\int_{0}^{\infty}      \frac{1}{p'(V)} p(v|V)_{x}\Psi_{x} \operatorname{d}x \operatorname{d} t  \nonumber  \\
= :  &\frac{1}{2}  \int_{0}^{\infty}  \left(\phi_x^2-\frac{\Psi_x^2}{p'(V)}\right)\Big|_{t=0}   \operatorname{d}x +\sum_{i=1}^{4} B_i.
\label{4.15}\end{align}
Now we estimate $B_i$ term by term. The Cauchy inequality indicates that
\begin{align}
|B_1|&    \leq     C  \int_0^t\int_{0}^{\infty}   (|\phi_{xx}\phi_x|+|V_x\phi^{2}_x|)|{ \phi_{xx}}| +      | \phi_x     \phi_{xx} |               \operatorname{d} x \operatorname{d}t \nonumber   \\
  &\leq (C \delta + \varepsilon )\int_0^t  \|\phi_{xx}\|^{2}  \operatorname{  d}t  + C_{\varepsilon} \int_0^t   \|\phi_{x}\|^2\operatorname{  d}t ,
\end{align}
and
\begin{align}
|B_2|\leq&  \int_0^t  \int_{0}^{\infty}  \left|p'(V)\Psi_x \phi_x\left(\frac{1}{p'(V)}\right)_x  \right|  \operatorname{d} x \operatorname{d}t \nonumber \\
\leq&\frac{1}{4} \int_0^t\int_{0}^{\infty}\left(\frac{1}{ p'(V)}\right)_t\Psi_x ^2 \operatorname{d} x \operatorname{d}t +C \int_0^t   \|\phi_x\| ^2 \operatorname{  d}t.
\end{align}
Making use of the estimate (\ref{4.7}) for the boundary, we have
\begin{align}
 B_3 &=          -\int_0^t         \left(\phi_{t}\phi_{x}-\phi_{x}\Psi_{x} -\frac{\Psi_{t}\Psi_{x}}{p'(V)}-\frac{p(v|V)}      {p'(V)}\Psi_{x}\right)\Big|_{x=0}       \operatorname{  d }t \nonumber  \\
 &=         - \int_0^t         (\phi_{t}\phi_{x} )|_{x=0} \leq C    e^{-C_- \beta}.
\end{align}
By (\ref{4.9}) and the Sobolev inequality, we obtain
\begin{align}
|B_4|\leq& \int_0^t\int_{0}^{\infty}     \left| \frac{1}{p'(V)} p({v}|V)_{x}\Psi_{x}\right| \operatorname{d}x \operatorname{d} t \nonumber \\
\leq & C\int_0^t\int_{0}^{\infty}     \left| (\phi_{x}\phi_{xx}+V_x\phi_{x}^{2}) \Psi_{x}       \right| \operatorname{d}x \operatorname{d} t  \nonumber  \\
\leq&   C\int_0^t\int_{0}^{\infty}    \left\{ \left| (\phi_{x}\phi_{xx}+V_x\phi_{x}^{2}) \psi_{x}       \right| +   \left| (\phi_{xx}\phi_{xx}+V_x\phi_{x}\phi_{xx} ) \phi_{x}       \right| \right\}\operatorname{d}x\operatorname{d} t \nonumber \\
\leq&   C (\|\phi\|_{2}+  \|\psi\|_{2}   ) \int_0^t        \|\phi_x\|^2 +\|\phi_{xx}\|^2  \operatorname{  d}t \nonumber \\
\leq&   C\delta \int_0^t    (\|\phi_{xx}\|^2+\|\phi_{x}\|^2 )    \operatorname{ d} t.
\label{4.19}\end{align}

From (\ref{4.15})--(\ref{4.19}), we get
\begin{align*}
\begin{split}
& \frac{1}{2} \int_{0}^{\infty}  \left(\phi_x^2-\frac{\Psi_x^2}{p'(V)}\right)    \operatorname{d  }x    + \frac{1}{4}\int_0^t\int_{0}^{\infty}    \left[\left(\frac{1}{p'(V)}\right)_t\Psi_x^2  +\frac{\phi^2_{xx}}{V^{\alpha+1}} \right] \operatorname{d}x \operatorname{d} t \\
\leq&  (C +C\delta+C_{\varepsilon} )\int_0^t   \|\phi_{x}\|^2 \operatorname{  d}t
 +(C\delta+\varepsilon )\int_0^t     \|\phi_{xx}\|^2  \operatorname{  d}t\\
& +  C_{ } e^{-C_-\beta}+C_{ }    \left(   \|\phi_{0x}\|^2   +\|\Psi_{0x}\|^2 \right)   .
\end{split}&
\end{align*}
Choosing  $\varepsilon$ sufficiently small,   together with Lemma \ref{lemma4.3}, we complete  the proof of Lemma \ref{lemma4.4}.
\end{proof}

\begin{lemma}\label{lemma4.5}
Under the same assumptions of Proposition \ref{prposition3.1}, it holds that
\begin{eqnarray*}
\int_0^t    \|\Psi_{x}(t)\|_{  }^2   \operatorname{d}t  \leq   C  \|(\phi_0,\Psi_0)\|_{ 1}^2 +    C e^{-C_-\beta}.
\end{eqnarray*}
\end{lemma}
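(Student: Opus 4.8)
The goal is to estimate $\int_0^t \|\Psi_x\|^2 \operatorname{d}t$, which does not appear with a good sign in the previous two lemmas (Lemma 4.3 and 4.4 only produce $\int_0^t\int (1/p'(V))_t \Psi^2$ and $\int_0^t\int (1/p'(V))_t \Psi_x^2$, which are not coercive since $(1/p'(V))_t = -(p'(V))_t / p'(V)^2$ degenerates away from the shock layer). The natural way to recover control of $\Psi_x$ is to use the equation $(\ref{4.5})_2$, namely $\Psi_t + p'(V)\phi_x = -p(v|V)$, together with $(\ref{4.5})_1$, $\phi_t - \Psi_x - \frac{\phi_{xx}}{V^{\alpha+1}} + (\alpha+1)\frac{V_x\phi_x}{V^{\alpha+2}} = G$. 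The plan is to multiply $(\ref{4.5})_1$ by $\Psi_x$ (or equivalently by $-\psi_x$, up to the $p(x,t)$ correction of (\ref{4.10})) and integrate over $[0,t]\times[0,\infty)$. This produces a $+\int_0^t\|\Psi_x\|^2$ term; the coupling term $\int\int \phi_t \Psi_x$ is handled by integration by parts in $t$ to move the $t$-derivative onto $\Psi$, then using $(\ref{4.5})_2$ to rewrite $\Psi_{xt}$ — this is the standard device that converts a cross term into boundary-in-time terms $\int (\phi\Psi_x)|_{t=0}^{t}$ plus $\int_0^t\int \phi \cdot(\text{stuff involving }\phi_x, p(v|V)_x)$, all absorbable by Lemmas 4.3–4.4 and smallness of $\delta$.

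More concretely, the steps I would carry out in order are: (i) take the inner product of $(\ref{4.5})_1$ with $\Psi_x$ over space; (ii) integrate by parts in $x$ on the terms $\int \frac{\phi_{xx}}{V^{\alpha+1}}\Psi_x$ and $\int\phi_t\Psi_x$ — the latter via $\int\phi_t\Psi_x = \frac{d}{dt}\int\phi\Psi_x - \int\phi\Psi_{xt}$, wait, better to write $\int\phi_t\Psi_x \,dx = -\int \phi_{tx}\Psi\,dx + (\text{bdry}) = -\frac{d}{dt}\int\phi_x\Psi\,dx + \int\phi_x\Psi_t\,dx + (\text{bdry})$, and then substitute $\Psi_t = -p'(V)\phi_x - p(v|V)$ from $(\ref{4.5})_2$, producing the coercive-looking term $\int (-p'(V))\phi_x^2 > 0$ which is innocuous, plus $\int\phi_x p(v|V)$ which is cubic; (iii) integrate in $t$ over $[0,t]$; (iv) collect: the leading term is $\int_0^t\|\Psi_x\|^2$, against which we balance $\int_0^t\|\phi_x\|^2$ (controlled by Lemma 4.3), $\int_0^t\|\phi_{xx}\|^2$ (controlled by Lemma 4.4), the $G$-term and $p(v|V)$-term (cubic, absorbed by $\delta\cdot(\|\phi_x\|^2+\|\phi_{xx}\|^2)$ via Lemma 4.1), the time-boundary terms $\|\phi_x\Psi\||_{t=0}$ and $\|\phi_x\Psi\||_{t}$ (the latter absorbed into the left side of Lemma 4.4 at the price of a small constant), and the spatial boundary terms at $x=0$ (handled by Lemma 4.2, giving $Ce^{-C_-\beta}$). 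Finally apply Young's inequality and the bounds of Lemmas 4.3–4.4 to close.

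The main obstacle I anticipate is the bookkeeping of the $x=0$ boundary terms generated by the two integrations by parts in $x$: terms like $(\phi_t\Psi)|_{x=0}$, $(\phi_x\Psi)|_{x=0}$, and $\frac{1}{V^{\alpha+1}}(\phi_x\Psi_x)|_{x=0}$ will appear, and one must check each is $O(e^{-C_-\beta})$. Some of these are immediate from Lemma 4.2 and the decay estimates (\ref{4.11}) (e.g. $|\phi(0,t)|\le Ce^{-C_-\beta}e^{-C_- st}$ makes any product with $\phi$ exponentially small), but $\Psi(0,t)$ and $\Psi_x(0,t)$ are only bounded, not small, so the product $(\phi_x\Psi_x)|_{x=0}$ or $(\phi_t\Psi)|_{x=0}$ needs the decay to come from the other factor or from the structure of the boundary data $\psi_x|_{x=0} = \phi_t|_{x=0} = -U(st+\beta_0-\beta)$, which does decay like $e^{-C_-\beta}e^{-C_-st}$ since $V(\xi)-v_-$ and hence $U(\xi)$ decays exponentially as $\xi\to-\infty$. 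A secondary subtlety is that the coefficient $(1/p'(V))_t \Psi_x^2$ term, if it shows up with a bad sign in this estimate, must be absorbed into the good copy of it on the left side of Lemma 4.4's inequality — so one should be careful to keep a fraction (say $1/4$) of that term in reserve, exactly as was done in the proof of Lemma 4.4.
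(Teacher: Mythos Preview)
Your plan is essentially the paper's own proof: multiply $(4.5)_1$ by $\Psi_x$, rewrite the cross term $\phi_t\Psi_x$ via integration by parts and $(4.5)_2$ (the paper writes $\phi_t\Psi_x=(\phi\Psi_x)_t-\phi\Psi_{xt}$ and then $\Psi_{xt}=-(p(v)-p(V))_x$, which is equivalent to your second route up to harmless boundary bookkeeping), apply Cauchy--Young to the remaining integrals, and close with Lemmas~4.3--4.4 and the boundary bounds of Lemma~4.2.

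Two small corrections to your outline. First, do \emph{not} integrate $\int \frac{\phi_{xx}}{V^{\alpha+1}}\Psi_x\,dx$ by parts as you propose in step~(ii): that would generate $\int \phi_x\Psi_{xx}$, and $\int_0^t\|\Psi_{xx}\|^2\,dt$ is not available at this stage; the paper simply applies Young directly to obtain $\varepsilon\int_0^t\|\Psi_x\|^2+C_\varepsilon\int_0^t\|\phi_{xx}\|^2$, the latter already controlled by Lemma~4.4. Second, your ``secondary subtlety'' about reserving a fraction of $\int(1/p'(V))_t\Psi_x^2$ is moot here---that quantity never appears in this estimate; the argument closes purely against $\int_0^t\|\phi_x\|_1^2$ and $\sup_t\|(\phi,\Psi)\|_1^2$ from Lemma~4.4.
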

\begin{proof}
Multiply $(\ref{4.5})_1 $  by $\Psi_{x}$  and make use of $(\ref{4.5})_2$. We get
\begin{align}\label{4.20}
\Psi_{x}^{2}=&(\phi\Psi_{x})_{t}+[\phi (p(v)-p({V}{}))]_{x}-\phi_{x} (p( v)-p({V}{})) \nonumber  \\
&-\frac{\Psi_{x}\phi_{xx}}{V^{\alpha+1}}-\Psi_{x}\left[G-(\alpha+1)\frac{V_x\phi_x}{V_{\alpha+2}}\right].
\end{align}
Integrate $ (\ref{4.20})$ with  respect to $t$ and $x$ over $ [0,t]\times [0,\infty)$. We have
\begin{align}\label{4.21}
& \int_{0}^{t}      \|\Psi_{x}\|^{2}  \operatorname{    d}t \nonumber \\
=&-\int_{0}^{\infty} \phi\Psi_{x}|_{t=0}\operatorname{d }x+\int_{0}^{t}\int_{0}^{\infty} -\Psi_{x}\left[G-(\alpha+1)\frac{V_x\phi_x}{V_{\alpha+2}}\right]\operatorname{d}x \operatorname{d}t
\nonumber  \\
&+\int_{0}^{\infty}\phi\Psi_{x}\operatorname{d} x-\int_{0}^{t}\int_{0}^{\infty} \frac{\Psi_{x}\phi_{xx}}{V^{\alpha+1}}\operatorname{d}x  \operatorname{d}t \nonumber \\
&-\int_{0}^{t}\int_{0}^{\infty}\phi_{x}\left(p( {v})-p(V)\right)       \operatorname{d}x \operatorname{d}t-\int_{0}^{t} \phi (p(v)-p({V}{}))|_{x=0}     \operatorname{  d}t \nonumber  \\
=:&-\int_{0}^{\infty} \phi\Psi_{x}|_{t=0}\operatorname{d}x+\sum_{i=1}^5 H_i.
\end{align}
We estimate $H_i$ term by term. By the Cauchy inequality, it holds that
\begin{align}
H_1&\leq C \int_{0}^{t}\int_{0}^{\infty}    \Psi_{x}(|\phi_{x}\phi_{xx}|+|V_{x}\phi_{x}|)         \operatorname{d}x \operatorname{d}t \nonumber \\
&\leq  \varepsilon \int_{0}^{t}  \|\Psi_{x}\|^{2}  \operatorname{  d}t+ C_{\varepsilon} \int_{0}^{t}   ( \|\phi_{xx}\|^{2} + \|\phi_{x} \|^{2}   )      \operatorname{  d}t.
\end{align}
In addition, it is straightforward to imply that
\begin{align}
&H_2+H_3+H_4 \nonumber  \\
\leq &  \| (\phi^{}, \Psi_{x}) \|^{2}+\varepsilon\int_{0}^{t} \|\Psi_{x}\|^{2}\operatorname{  d}t+C_{\varepsilon}\int_{0}^{t} \|\phi_{xx}\|^2       \operatorname{ d}t+C\int_{0}^{t}\|\phi_{x}\|^2 \operatorname{  d}t.
\end{align}
Making use of the estimate (\ref{4.6}) for the boundary, we have

\begin{align}\label{4.24}
\begin{split}
H_5&= -\int_{0}^{t} \phi (p(v)-p({V}{}))|_{x=0}     \operatorname{  d}t \leq C \int_{0}^{t} \phi  \phi_{x} |_{x=0}     \operatorname{  d}t  \leq Ce^{-C_-\beta}.\\
\end{split}&
\end{align}
Collecting   (\ref{4.21})-(\ref{4.24}) and using Lemma \ref{lemma4.4}, we complete the proof of Lemma \ref{lemma4.5}.
\end{proof}

Combining Lemma \ref{lemma4.3}-Lemma \ref{lemma4.5}, we  obtain the following low order estimates
\begin{align*}
\|(\phi,\Psi)\|_{ 1}^2(t)+ \int_0^t  \| \Psi_x\|^2  \operatorname{d}t+\int_0^t \| \phi_x\|_{1}^2 \operatorname{d}t\leq C_{}\|(\phi_0,\Psi_0)\|_{ 1}^2 + C_{} e^{-C_-\beta},
\end{align*}
which can be rewritten by the variables $\phi$ and $\psi$ as
\begin{lemma}\label{lemma4.6}
Under the same assumptions of Proposition \ref{prposition3.1}, it holds that
\begin{align*}
\begin{split}
&(\| \phi \|_{ 1}^2 + \| \psi \| ^2)(t)+\int_0^t  \| \psi_x\|^2  \operatorname{d}t+\int_0^t \| \phi_x\|_{1}^2 \operatorname{d}t\leq C_{}\|     \phi_0 \|_{2}^2 +C_{}\| \psi_0 \|_{ 1}^2 + C_{} e^{-C_-\beta}.
\end{split}
\end{align*}
\end{lemma}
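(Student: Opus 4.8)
The plan is to obtain Lemma \ref{lemma4.6} as an immediate consequence of Lemmas \ref{lemma4.3}--\ref{lemma4.5} together with the comparison inequalities \eqref{4.9} of Lemma \ref{lemma 4.2}. All the genuine analysis — the weighted energy estimates carried out on the decoupled system \eqref{4.5}, which is precisely what lets the restriction $v_+-v_-<C(\gamma-1)^{-2}$ be dropped — is already contained in Lemmas \ref{lemma4.3}--\ref{lemma4.5}; what remains is only to pass from the auxiliary unknown $\Psi$ back to the original anti-derivative $\psi$.

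First I would add the three estimates. Once $\delta\le\delta_0$ is fixed small, the term $C\delta\int_0^t\|\phi_{xx}\|^2\,\mathrm{d}t$ on the right of Lemma \ref{lemma4.3} is absorbed into $\int_0^t\|\phi_x\|_1^2\,\mathrm{d}t$ supplied on the left of Lemma \ref{lemma4.4}, and the good-sign weighted terms $\int_0^t\int_0^\infty(1/p'(V))_t\Psi^2$ and $\int_0^t\int_0^\infty(1/p'(V))_t\Psi_x^2$ (nonnegative since $V_t=-sV'<0$) may simply be discarded. This yields the combined low-order bound in the variables $(\phi,\Psi)$,
\[
\|(\phi,\Psi)\|_1^2(t)+\int_0^t\|\Psi_x\|^2\,\mathrm{d}t+\int_0^t\|\phi_x\|_1^2\,\mathrm{d}t\le C\|(\phi_0,\Psi_0)\|_1^2+Ce^{-C_-\beta},
\]
namely the display appearing just before the statement of Lemma \ref{lemma4.6}.

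Next I would translate this into the original perturbation via \eqref{4.9}. Because $\Psi-\psi=V^{-(\alpha+1)}-v^{-(\alpha+1)}$ satisfies $|\Psi-\psi|\le C|\phi_x|$, the right-hand side obeys $\|(\phi_0,\Psi_0)\|_1^2\le\|\phi_0\|_1^2+\|\Psi_0\|_1^2\le C\|\phi_0\|_2^2+\|\psi_0\|_1^2$, while on the left $\|\phi\|_1^2+\|\psi\|^2\le C\|(\phi,\Psi)\|_1^2$ and $\int_0^t\|\psi_x\|^2\,\mathrm{d}t\le\int_0^t\|\Psi_x\|^2\,\mathrm{d}t+C\int_0^t\|\phi_x\|_1^2\,\mathrm{d}t$, the integral $\int_0^t\|\phi_x\|_1^2\,\mathrm{d}t$ being common to both formulations. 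Feeding these comparisons into the combined bound gives exactly the inequality asserted in Lemma \ref{lemma4.6}.

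I do not expect a real obstacle at this step: the change of unknown is lossless precisely because the discrepancy $\Psi-\psi$ is of size $|\phi_x|$, a quantity already dominated — both pointwise in $t$ and after integration in time — by the norms retained on each side. The only point requiring attention is the order of operations, namely fixing $\delta\le\delta_0$ before summing so that the $C\delta$-contributions produced in Lemmas \ref{lemma4.3}--\ref{lemma4.5} are under control; this is already built into the hypotheses of Proposition \ref{prposition3.1}.
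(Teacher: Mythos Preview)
Your proposal is correct and follows exactly the paper's approach: the paper explicitly states that combining Lemmas \ref{lemma4.3}--\ref{lemma4.5} gives the displayed bound in the $(\phi,\Psi)$ variables, ``which can be rewritten by the variables $\phi$ and $\psi$'' as Lemma \ref{lemma4.6}, using precisely the comparison inequalities \eqref{4.9}. Your accounting of the absorption of the $C\delta\int_0^t\|\phi_{xx}\|^2\,\mathrm{d}t$ term and the sign of $(1/p'(V))_t$ is accurate and simply makes explicit what the paper leaves implicit.
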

\subsection{High Order Estimate.}

Since the second derivative of $\Psi$ on the boundary is unknown, we turn to the original equation \eqref{3.3} to study the higher order estimates.
\begin{lemma}\label{lemma4.7}
Under the same assumptions of Proposition \ref{prposition3.1}, it holds that
\begin{align}\label{4.25}
\begin{split}
&  \| \psi_{x} \|  ^2 (t)  +\int_0^t  \| \psi_{xx}\| ^2  \operatorname{d}t\leq C_{}\|     \phi_0 \|_{2}^2 +C_{}\| \psi_0 \|_{ 1}^2 + C_{} e^{-C_-\beta}.
\end{split}
\end{align}
\end{lemma}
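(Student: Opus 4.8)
The plan is to derive the estimate \eqref{4.25} directly from the original reformulated system \eqref{3.3}, multiplying $(\ref{3.3})_2$ by $-\psi_{xx}$ and integrating over $[0,t]\times[0,\infty)$. Since $\phi_t=\psi_x$ from $(\ref{3.3})_1$, the term $\psi_t\psi_{xx}$ will, after integration by parts in $x$, produce $\tfrac12\partial_t\psi_x^2$ plus a boundary term $-(\psi_{xt}\psi_x)|_{x=0}$, which is handled by the second estimate in \eqref{4.7}; the viscous term $-\tfrac{\psi_{xx}}{V^{\alpha+1}}$ contributes the good dissipation $\int\tfrac{\psi_{xx}^2}{V^{\alpha+1}}$; and the term $f(V)\phi_x\psi_{xx}$ is moved, via $\phi_t=\psi_x$ and an integration by parts in $t$, into a term of the form $\partial_t(f(V)\phi_x\psi_x)$ plus $f(V)\psi_x^2$, $f'(V)V_t\phi_x\psi_x$, etc. — all of which are either already controlled by Lemma \ref{lemma4.6} or absorbed. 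First I would carefully organize these integrations by parts so that every boundary contribution that appears is one of the six listed in Lemma \ref{lemma 4.2}, each bounded by $Ce^{-C_-\beta}$.

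Next I would estimate the forcing term $\int_0^t\int_0^\infty F\psi_{xx}$. Using \eqref{3.6}, $F=O(1)(|\phi_x|^2+|\phi_x\psi_{xx}|)$, so this integral is bounded by $C\int_0^t\int_0^\infty(|\phi_x|^2+|\phi_x\psi_{xx}|)|\psi_{xx}|$. The term with $|\phi_x\psi_{xx}^2|$ is controlled by $C\|\phi_x\|_{L^\infty}\int_0^t\|\psi_{xx}\|^2\le C\delta\int_0^t\|\psi_{xx}\|^2$ and absorbed into the dissipation for $\delta$ small; the term with $|\phi_x|^2|\psi_{xx}|$ is split by Cauchy's inequality into $\varepsilon\|\psi_{xx}\|^2 + C_\varepsilon\|\phi_x\|_{L^\infty}^2\|\phi_x\|^2$, again absorbed using $\|\phi_x\|_{L^\infty}\le\delta$ and the already-established bound $\int_0^t\|\phi_x\|^2\le$ RHS from Lemma \ref{lemma4.6}. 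The terms involving $V_x$ and lower-order products of $\phi_x,\psi_x$ coming from the $f(V)$-manipulation are similar and absorbed into $\int_0^t\|\phi_x\|_1^2\operatorname{d}t$, which Lemma \ref{lemma4.6} already bounds by $C\|\phi_0\|_2^2+C\|\psi_0\|_1^2+Ce^{-C_-\beta}$.

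Finally I would collect the terms: the left side gives $\tfrac12\int_0^\infty\psi_x^2\,\operatorname{d}x + c\int_0^t\|\psi_{xx}\|^2\operatorname{d}t$ (possibly after also adding a small multiple of a $\int\phi_x\psi_x$ cross term to complete a coercive quadratic form at time $t$, controlled since $\|\phi_x\psi_x\|_{L^1}\le\tfrac12\|\phi_x\|^2+\tfrac12\|\psi_x\|^2$ is bounded by Lemma \ref{lemma4.6}), while the right side consists of the initial data $\|\psi_{0x}\|^2+C\|\phi_0\|_2^2$, the exponentially small boundary contributions $Ce^{-C_-\beta}$, the terms $(C\delta+\varepsilon)\int_0^t\|\psi_{xx}\|^2$ to be absorbed, and terms already bounded by the low-order estimate. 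Choosing $\varepsilon$ and then $\delta$ small yields \eqref{4.25}.

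The main obstacle is the treatment of the mixed term $f(V)\phi_x\psi_{xx}$: unlike in the Cauchy problem, integrating by parts in $x$ would generate a boundary term $-(f(V)\phi_x\psi_x)|_{x=0}$ whose time integral is not obviously exponentially small, so one is forced to integrate by parts in $t$ instead (using $\phi_t=\psi_x$), which is exactly why this estimate must be done on the original system \eqref{3.3} rather than the decoupled system \eqref{4.5} — there the analogue of $f(V)$ multiplies $\phi_{xxx}$-type quantities whose boundary traces are uncontrolled. Keeping track of the $V_t$ and $V_x$ error terms produced by this $t$-integration by parts, and making sure each is genuinely of lower order (controlled by Lemma \ref{lemma4.6} or by $\delta$ times the dissipation), is the delicate bookkeeping that the proof must handle with care.
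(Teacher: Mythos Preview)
Your overall framework is right---multiply $(\ref{3.3})_2$ by $-\psi_{xx}$, integrate, use the boundary estimate \eqref{4.7} for the $\psi_t\psi_x|_{x=0}$ term, and handle $F\psi_{xx}$ by \eqref{3.6} plus Cauchy and the smallness of $\delta$---and this matches the paper exactly. Where you diverge is in the treatment of the mixed term $\int_0^t\int_0^\infty f(V)\phi_x\psi_{xx}\,\mathrm{d}x\,\mathrm{d}t$: you propose a $t$-integration by parts via $\phi_t=\psi_x$, arguing that an $x$-integration by parts would produce an uncontrolled boundary term. Neither step is needed. The paper simply applies Cauchy's inequality directly,
\[
\Bigl|\int_0^t\int_0^\infty f(V)\phi_x\psi_{xx}\,\mathrm{d}x\,\mathrm{d}t\Bigr|
\le \varepsilon\int_0^t\|\psi_{xx}\|^2\,\mathrm{d}t + C_\varepsilon\int_0^t\|\phi_x\|^2\,\mathrm{d}t,
\]
and the second term is already bounded by Lemma~\ref{lemma4.6}. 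No boundary term ever appears. (Incidentally, your stated obstruction is also false: had you integrated by parts in $x$, the resulting boundary term $f(V)\phi_x\psi_x|_{x=0}$ \emph{is} exponentially small, since $\psi_x|_{x=0}=-U(-st+\beta_0-\beta)$ by \eqref{3.4}.) Your $t$-integration-by-parts route can be made to work---though the identity you wrote, $\partial_t(f(V)\phi_x\psi_x)$, is not the right one; using $\psi_{xx}=\phi_{xt}$ one gets $f(V)\phi_x\psi_{xx}=\tfrac12\partial_t(f(V)\phi_x^2)-\tfrac12 f'(V)V_t\phi_x^2$---but it buys nothing over the one-line Cauchy argument.
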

\begin{proof}  Multiplying   $(\ref{3.3})_{2}$ by $-\psi_{x x}$,  integrating the result with  respect to $t$ and $x$ over $ [0,t]\times [0,\infty) $ gives
\begin{align} \label{4.26}
&   \frac{1}{2}\| \psi_{x}\|^{2}  (t)
+\int_{0}^{t}\int_{0}^{\infty} \frac{{\psi_{x x}^{2}}}{V^{\alpha+1}} \operatorname{d}x \operatorname{d}t \nonumber \\
=&\frac{1}{2}\| \psi_{0x}\|^{2}  -\int_{0}^{t}  \left\{\psi_{x} \psi_{t}\right\}|_{x=0} \operatorname{  d}t
-\int_{0}^{t}\int_{0}^{\infty} f(V) \phi_{x} \psi_{x x} \operatorname{d}x \operatorname{d}t \nonumber \\
&-\int_{0}^{t}\int_{0}^{\infty} F \psi_{x x} \operatorname{d}x \operatorname{d}t
 \nonumber  \\
=:&\frac{1}{2}\| \psi_{0x}\|^{2} +\sum_{i=1}^3 M_i.
\end{align}
Making use of the estimate (\ref{4.7}) for the boundary, we have
\begin{align} \label{4.27}
M_{1}  \leq C_{} e^{-C_-\beta}.
\end{align}
The Cauchy inequality implies that
\begin{align}
M_{2}  \leq \varepsilon \int_{0}^{t} \|\psi_{x x}\|^{2}\operatorname{  d}t+ C_{\varepsilon}\int_{0}^{t}  \|\phi_{x}\|^{2}\operatorname{  d}t.
\end{align}
By (\ref{3.6}) and the Sobolev inequality, yields
\begin{align} \label{4.29}
M_{3} &\leq C \int_{0}^{t}\int_{0}^{\infty}\left(\left|\phi_{x}\right|^{2}  +\left|\phi_{x}\right|\left|\psi_{x x}\right|\right)\left|\psi_{x x}\right| \operatorname{d} x \operatorname{d}t \nonumber  \\
&\leq C \int_{0}^{t}\int_{0}^{\infty}\left|\phi_{x}\right|\left(\left|\phi_{x}\right|^{2}+\left|\psi_{x x}\right|^{2}\right) \operatorname{d}x\operatorname{d}t \nonumber  \\
&\leq C   \delta \int_{0}^{t} \left(\left\|\phi_{x}\right\|^{2}+\left\|\psi_{x x}\right\|^{2}\right) \operatorname{  d}t.
\end{align}
Substituting (\ref{4.27})-(\ref{4.29})  into $(\ref{4.26}) $ and using Lemma \ref{lemma4.6}, we obtain (\ref{4.25}).
\end{proof}

\begin{lemma}\label{lemma4.8}
Under the same assumptions of Proposition \ref{prposition3.1}, it holds that
\begin{align}\label{4.30}
\begin{split}
&\| \phi_{xx} \| ^2   +\int_0^t  \| \phi_{xx}\|_{}^2  \operatorname{d}t \leq C_{}\|     \phi_0 \|_{2}^2 +C_{}\| \psi_0 \|_{ 1}^2 + C_{} e^{-C_-\beta}+ C\delta \int_{0}^{t}\left\|\psi_{xx x} \right\|^2  \operatorname{d}t.
\end{split}
\end{align}
\end{lemma}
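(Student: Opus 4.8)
The plan is to derive a second-order equation for $\phi$ and test it against $-\phi_{xx}$, the point being a structural cancellation that keeps the $\phi_{xx}$--dissipation strictly positive regardless of the shock strength. Differentiate $(\ref{3.3})_{2}$ in $x$, multiply by $-\phi_{xx}$, and integrate over $[0,t]\times\mathbb{R}_{+}$. Using $(\ref{3.3})_{1}$ (so that $\psi_{x}=\phi_{t}$, hence $\psi_{xx}=\phi_{xt}$ and $\phi_{xxt}=\psi_{xxx}$) the time-derivative term becomes, after one integration by parts in $x$,
\begin{equation*}
-\int_{0}^{\infty}\psi_{xt}\,\phi_{xx}\,dx=-\frac{d}{dt}\int_{0}^{\infty}\psi_{x}\phi_{xx}\,dx-(\psi_{x}\psi_{xx})\big|_{x=0}-\|\psi_{xx}\|^{2},
\end{equation*}
while the viscous and convective terms produce $\tfrac12\tfrac{d}{dt}\int_{0}^{\infty}V^{-(\alpha+1)}\phi_{xx}^{2}\,dx$, the dissipation $\int_{0}^{\infty}f(V)\phi_{xx}^{2}\,dx$, and the lower-order pieces $\int f'(V)V_{x}\phi_{x}\phi_{xx}$, $\int (V^{-(\alpha+1)})_{x}\psi_{xx}\phi_{xx}$, $\tfrac12\int(V^{-(\alpha+1)})_{t}\phi_{xx}^{2}$. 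Collecting everything yields an identity of the schematic form
\begin{equation*}
\frac{d}{dt}\!\int_{0}^{\infty}\!\Big(\tfrac12 V^{-(\alpha+1)}\phi_{xx}^{2}-\psi_{x}\phi_{xx}\Big)dx+\int_{0}^{\infty}\!\Big(f(V)-\tfrac12(V^{-(\alpha+1)})_{t}\Big)\phi_{xx}^{2}\,dx=(\psi_{x}\psi_{xx})\big|_{x=0}+R,
\end{equation*}
where $R$ gathers $\|\psi_{xx}\|^{2}$, the three lower-order integrals above, and $-\int F_{x}\phi_{xx}$.

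The crucial point is the identity $f(V)-(V^{-(\alpha+1)})_{t}=-p'(V)$. Indeed, from $(\ref{2.3})$ one has $V_{t}=-sV'=-V^{\alpha+1}h(V)$, whence $(V^{-(\alpha+1)})_{t}=(\alpha+1)V^{-1}h(V)$, while $(\ref{3.5})$ gives $f(V)=-p'(V)+(\alpha+1)V^{-1}h(V)$. Since $-p'(V)\geq c_{0}>0$ uniformly on $v_{-}\leq V\leq v_{+}$, the dissipation coefficient satisfies $f(V)-\tfrac12(V^{-(\alpha+1)})_{t}=-p'(V)+\tfrac12(\alpha+1)V^{-1}h(V)\geq c_{0}>0$, so that $\int_{0}^{\infty}\big(f(V)-\tfrac12(V^{-(\alpha+1)})_{t}\big)\phi_{xx}^{2}\,dx\geq c_{0}\|\phi_{xx}\|^{2}$ with no restriction on $|v_{+}-v_{-}|$. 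This is exactly what removes, at the level of the $\phi_{xx}$ estimate, the smallness condition on $|v_{+}-v_{-}|$ present in \cite{mm1999}.

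It then remains to control $R$ and the boundary term. The boundary term $(\psi_{x}\psi_{xx})|_{x=0}$, once integrated in $t$, is bounded by $Ce^{-C_{-}\beta}$ thanks to $(\ref{4.8})$; the quantity $\int\psi_{x}\phi_{xx}\,dx$ in the time-derivative part is absorbed via Young's inequality into $\varepsilon\|\phi_{xx}(t)\|^{2}+C\|\psi_{x}(t)\|^{2}$ plus the data, and $\|\psi_{x}(t)\|$ and $\int_{0}^{t}\|\psi_{xx}\|^{2}$ are bounded by Lemma \ref{lemma4.7}. The integrals $\int f'(V)V_{x}\phi_{x}\phi_{xx}$ and $\int (V^{-(\alpha+1)})_{x}\psi_{xx}\phi_{xx}$ are estimated by Young's inequality using the uniform bound on $V_{x}$ together with Lemmas \ref{lemma4.6}--\ref{lemma4.7}. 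Finally, writing $F_{x}=O(1)\big(|\phi_{x}\phi_{xx}|+|\phi_{xx}\psi_{xx}|+|\phi_{x}\psi_{xxx}|+|V_{x}|\phi_{x}^{2}\big)$ from $(\ref{3.6})$, every contribution of $-\int F_{x}\phi_{xx}$ except the one involving $\psi_{xxx}$ can be bounded by $C\delta\int_{0}^{t}(\|\phi_{xx}\|^{2}+\|\psi_{xx}\|^{2})\,dt+C\int_{0}^{t}\|\phi_{x}\|^{2}\,dt$ and absorbed; the remaining term is estimated using $\|\phi_{x}\|_{L^{\infty}}\leq\delta$ by $C\delta\int_{0}^{t}(\|\psi_{xxx}\|^{2}+\|\phi_{xx}\|^{2})\,dt$, producing exactly the term $C\delta\int_{0}^{t}\|\psi_{xxx}\|^{2}\,dt$ in $(\ref{4.30})$. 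Choosing $\delta$ and $\varepsilon$ small and integrating in $t$ gives $(\ref{4.30})$.

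The main obstacle is twofold. First, the integrations by parts must be arranged so that the only boundary traces produced are $(\psi_{x}\psi_{xx})|_{x=0}$ and $(\psi_{x}\phi_{xx})|_{x=0}$, which are precisely the quantities supplied by Lemma \ref{lemma 4.2}; multiplying $(\ref{3.3})_{2}$ directly by $-\phi_{xxx}$ would instead generate uncontrolled traces such as $\phi_{xx}|_{x=0}$ and $\psi_{t}|_{x=0}$, which is why one differentiates first and tests against $-\phi_{xx}$. Second, and decisively, one must establish the pointwise sign inequality $f(V)-\tfrac12(V^{-(\alpha+1)})_{t}\geq c_{0}>0$; granting it, the remaining steps are the routine Young/Sobolev manipulations above.
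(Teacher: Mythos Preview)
Your argument is correct and is essentially the paper's own proof: forming equation (4.31) and then differentiating in $x$ is algebraically the same as differentiating $(\ref{3.3})_{2}$ in $x$ and substituting $\psi_{xx}=\phi_{xt}$ from $(\ref{3.3})_{1}$, and in both cases the decisive step is the identity $f(V)-\tfrac12(V^{-(\alpha+1)})_{t}=f(V)-\tfrac{(\alpha+1)h(V)}{2V}\geq -p'(v_{+})>0$ which secures the $\phi_{xx}$-dissipation without any smallness on $|v_{+}-v_{-}|$. One small slip: in your closing paragraph you list $(\psi_{x}\phi_{xx})|_{x=0}$ among the boundary traces, but your own computation (and the paper's) produces only $(\psi_{x}\psi_{xx})|_{x=0}$, which is indeed the trace controlled by $(\ref{4.8})$.
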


\begin{proof}
 Differentiating    $(\ref{3.3})_{1}$ with  respect to $x$,  using  $(\ref{3.3})_{2}, $ we have
\begin{align}\label{4.31}
\begin{split}
\frac{  \phi_{x t}}{V^{\alpha+1}}+f(V) \phi_{x}=\psi_{t}-F.
\end{split}
 \end{align}
Differentiating    $(\ref{4.31})$ in respect of $x$ and multiplying  the derivative  by $\phi_{x x}$,  integrating the result in respect of $t$ and $x$ over $ [0,t]\times [0,\infty) $, using (\ref{2.3}), one has
\begin{align} \label{4.32}
 &\frac{1}{2} \int_{0}^{\infty}  \frac{ \phi_{x x}^{2}}{  V^{  \alpha+1  }}  \operatorname{d  }x
 +\int_{0}^{t}\int_{0}^{\infty}\left(f(V)-\frac{(\alpha+1)h(V)}{2 V}\right) \phi_{x x}^{2}\operatorname{d}x \operatorname{d}t \nonumber  \\
 =& \frac{1}{2} \int_{0}^{\infty}  \frac{ \phi_{x x}^{2}}{  V^{  \alpha+1  }}  \Big|_{t=0}\operatorname{d  }x-\int_{0}^{\infty}   \psi_{x } \phi_{x x}    \Big|_{t=0}\operatorname{d  }x+\int_{0}^{\infty}   \psi_{x } \phi_{x x}   \operatorname{d  }x\nonumber  \\
&+\int_{0}^{t}  \psi_{x } \psi_{x x} \Big|_{x=0}\operatorname{  d}t
+\int_{0}^{t}        \| \psi_{x x}\|^{2}\operatorname{    d}t-\int_{0}^{t}\int_{0}^{\infty}F_{x} \phi_{x x}\operatorname{d}x\operatorname{d}t
\nonumber  \\
&+(\alpha+1)\int_{0}^{t}\int_{0}^{\infty}\frac{{} V_{x}}{V^{\alpha+2}} \phi_{x t} \phi_{x x}\operatorname{d}x\operatorname{d}t-\int_{0}^{t}\int_{0}^{\infty}f(V)_{x} \phi_{x} \phi_{x x}\operatorname{d}x\operatorname{d}t\nonumber  \\
=:&\frac{1}{2} \int_{0}^{\infty}  \frac{ \phi_{x x}^{2}}{  V^{  \alpha+1  }}  \Big|_{t=0}\operatorname{d }x-\int_{0}^{\infty}   \psi_{x } \phi_{x x}    \Big|_{t=0}\operatorname{d  }x+\sum_{i=1}^6 N_i.
\end{align}
By (\ref{2.5}) and (\ref{3.5}), one has
\begin{align} \label{4.33}
f(V)-\frac{(\alpha+1)h(V)}{2 V}\geq -p'(v_+)>0.
\end{align}
The Cauchy inequality yields
\begin{align}
N_{1}\leq  \varepsilon  \|\phi_{x x}\|^{2}   +C_\varepsilon    \|\psi_{x }\|  ^{2}.
\end{align}
Making use of the estimate (\ref{4.8}) for the boundary, it follows that
\begin{align}
N_{2}  \leq C_{} e^{-C_-\beta}.
\end{align}
$ N_{3}  $ can be controlled by (\ref{4.25}). By  the Cauchy inequality, we have
\begin{align*}
\begin{split}
|N_{4}| \leq &  \varepsilon \int_{0}^{t}  \|\phi_{x x}\|^{2} \operatorname{  d}t  +    C_{\varepsilon}   \int_{0}^{t}\left\|F_{ x}\right\|^{2} \operatorname{  d}t. \\
\end{split}
\end{align*}
Using
\begin{align*}
\begin{split}
\left\|F_{x}\right\|^{2} & \leq C \int_{0}^{\infty}\left(\phi_{x}^{4}+\phi_{x}^{2} \phi_{x x}^{2}+\psi_{x x}^{2} \phi_{x x}^{2}+\psi_{x x x}^{2} \phi_{x}^{2}+\phi_{x}^{2} \psi_{x x}^{2}\right)\operatorname{d}x \\
& \leq C \delta\left(\left\|\phi_{x}\right\|_{1}^{2}+\left\|\psi_{x}\right\|_{2}^{2}\right),
\end{split}
\end{align*}
we have the estimate of $N_4$
\begin{align}
\begin{split}
|N_{4}| \leq &  \varepsilon \int_{0}^{t}  \|\phi_{x x}\|^{2} \operatorname{  d}t  +  C_{\varepsilon} \delta    \int_{0}^{t}  \left(\left\|\phi_{x}\right\|_{1}^{2}+\left\|\psi_{x}\right\|_{2}^{2}\right)\operatorname{  d}t. \\
\end{split}
\end{align}
The Cauchy inequality yields
\begin{align}
\begin{split}
|N_{5}| \leq C & \int_{0}^{t}\int_{0}^{\infty} \left|\frac{{} V_{x}}{V^{\alpha+2}} \psi_{x x} \phi_{x x}\right| \operatorname{d}x \operatorname{d}t\leq \varepsilon \int_{0}^{t}  \|\phi_{x x}\|^{2} \operatorname{  d}t  +    C_{\varepsilon}   \int_{0}^{t}\left\|\psi_{x x}\right\|^{2} \operatorname{  d}t, \\
\end{split}
\end{align}
\begin{align} \label{4.38}
\begin{split}
|N_{6}| \leq &  \varepsilon \int_{0}^{t}  \|\phi_{x x}\|^{2} \operatorname{  d}t  +    C_{\varepsilon}   \int_{0}^{t}\left\|\phi_{ x}\right\|^{2} \operatorname{  d}t. \\
\end{split}
\end{align}
Choosing  $\varepsilon$  small,  substituting  (\ref{4.33})-(\ref{4.38})  into (\ref{4.32})   and  using  Lemma \ref{lemma4.6}, Lemma \ref{lemma4.7},    we have (\ref{4.30}).
\end{proof}

On the other hand, differentiating the second equation of (\ref{3.3}) with respect to $x$, multiplying the derivative by $-\psi_{x x x}$, integrating the resulting equality over $[0, \infty) \times[0, t]$, using Lemma \ref{lemma4.6}-Lemma \ref{lemma4.8}, we can get the highest order estimate in the same way, which is listed as follows  and the proof is omitted.

\begin{lemma}\label{lemma4.9}
Under the same assumptions of Proposition \ref{prposition3.1}, it holds that
\begin{align}\label{4.39}
\begin{split}
&  \| \psi_{xx} (t)\| ^2+\int_0^t  \| \psi_{xxx}\| ^2  \operatorname{d}t \leq   C_{}\|   (  \phi_0 , \psi_0 )\|_{ 2}^2 + C_{} e^{-C_-\beta} .
\end{split}
\end{align}
\end{lemma}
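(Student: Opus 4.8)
My plan is to follow the route signalled just before the statement: stay with the original system \eqref{3.3} rather than \eqref{4.5}, because differentiating $(\ref{3.3})_2$ once in $x$ and testing against $-\psi_{xxx}$ produces, after a single integration by parts in $x$, only the boundary trace $\psi_{xt}\psi_{xx}|_{x=0}$, which is already controlled by $(\ref{4.8})$; no third-order boundary trace is needed. Concretely, I would differentiate $(\ref{3.3})_2$ in $x$,
\[
\psi_{xt}-\bigl(f(V)\phi_x\bigr)_x-\Bigl(\frac{\psi_{xx}}{V^{\alpha+1}}\Bigr)_x=F_x,
\]
multiply by $-\psi_{xxx}$, and integrate over $[0,\infty)\times[0,t]$. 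Using $-\int_0^\infty\psi_{xt}\psi_{xxx}\operatorname{d}x=(\psi_{xt}\psi_{xx})|_{x=0}+\tfrac12\tfrac{\operatorname{d}}{\operatorname{d}t}\|\psi_{xx}\|^2$ and $\bigl(\tfrac{\psi_{xx}}{V^{\alpha+1}}\bigr)_x\psi_{xxx}=\tfrac{\psi_{xxx}^2}{V^{\alpha+1}}+\bigl(\tfrac{1}{V^{\alpha+1}}\bigr)_x\psi_{xx}\psi_{xxx}$, this yields an energy identity whose left-hand side is $\tfrac12\|\psi_{xx}(t)\|^2+\int_0^t\int_0^\infty\tfrac{\psi_{xxx}^2}{V^{\alpha+1}}\operatorname{d}x\operatorname{d}t$ and whose right-hand side consists of the initial term $\tfrac12\|\psi_{0xx}\|^2$, the boundary term $-\int_0^t(\psi_{xt}\psi_{xx})|_{x=0}\operatorname{d}t$, and the four interior integrals of $f(V)\phi_{xx}\psi_{xxx}$, $f'(V)V_x\phi_x\psi_{xxx}$, $\bigl(\tfrac{1}{V^{\alpha+1}}\bigr)_x\psi_{xx}\psi_{xxx}$ and $F_x\psi_{xxx}$.

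Next I would bound each term. The initial term is $\le C\|\psi_0\|_2^2$, and the boundary term is $\le Ce^{-C_-\beta}$ directly by $(\ref{4.8})$. The first three interior terms are handled by the Cauchy inequality, placing a small parameter $\varepsilon$ on $\int_0^t\|\psi_{xxx}\|^2\operatorname{d}t$; since $V_x$ and $\bigl(V^{-(\alpha+1)}\bigr)_x$ are bounded and decay exponentially (Proposition of Section \ref{Sec.2}), the remainders are $C_\varepsilon\int_0^t\|\phi_{xx}\|^2$, $C_\varepsilon\int_0^t\|\phi_x\|^2$ and $C_\varepsilon\int_0^t\|\psi_{xx}\|^2$, controlled respectively by Lemma \ref{lemma4.8}, Lemma \ref{lemma4.6} and Lemma \ref{lemma4.7}. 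For the term with $F_x$, Cauchy gives $\varepsilon\int_0^t\|\psi_{xxx}\|^2+C_\varepsilon\int_0^t\|F_x\|^2$, and the pointwise-in-time bound $\|F_x\|^2\le C\delta(\|\phi_x\|_1^2+\|\psi_x\|_2^2)$ established inside the proof of Lemma \ref{lemma4.8} converts this into $\varepsilon\int_0^t\|\psi_{xxx}\|^2+C_\varepsilon\delta\int_0^t(\|\phi_x\|_1^2+\|\psi_x\|_2^2)\operatorname{d}t$.

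Finally I would collect everything. Inserting Lemma \ref{lemma4.6}, Lemma \ref{lemma4.7} and Lemma \ref{lemma4.8} into the right-hand side reduces it to $C\|(\phi_0,\psi_0)\|_2^2+Ce^{-C_-\beta}$ plus contributions of the form $(C\varepsilon+C_\varepsilon\delta)\int_0^t\|\psi_{xxx}\|^2\operatorname{d}t$: here one must note the feedback, namely that Lemma \ref{lemma4.8} carries a residual $C\delta\int_0^t\|\psi_{xxx}\|^2$ and $\|F_x\|^2$ carries a $\psi_{xxx}^2$ term, but both come with a factor $\delta$. Choosing $\varepsilon$ small and then $\delta$ small relative to $\varepsilon$ keeps the coefficient of $\int_0^t\|\psi_{xxx}\|^2\operatorname{d}t$ on the left strictly positive, and $(\ref{4.39})$ follows.

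I expect the main obstacle to be the bookkeeping of this interlocking chain of estimates — verifying that a single threshold $\delta_0$ works simultaneously in Lemmas \ref{lemma4.7}--\ref{lemma4.9} so that all the $\int_0^t\|\psi_{xxx}\|^2$ and $\int_0^t\|\phi_{xx}\|^2$ feedback terms are absorbed. The genuinely structural point, however, is already secured: by reducing to the original system one only ever meets the first- and second-order boundary traces $\psi_x|_{x=0}$, $\psi_t|_{x=0}$, $\psi_{xt}|_{x=0}$, $\psi_{xx}|_{x=0}$, all of which inherit the decay $e^{-C_-st}$ from $\psi_x|_{x=0}=\phi_t|_{x=0}=-U(st+\beta_0-\beta)$; a third-order trace would be the real difficulty, and this argument is designed precisely to avoid it.
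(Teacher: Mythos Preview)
Your proposal is correct and follows exactly the approach indicated by the paper: differentiate $(\ref{3.3})_2$ in $x$, test with $-\psi_{xxx}$, pick up only the boundary trace $(\psi_{xt}\psi_{xx})|_{x=0}$ controlled by $(\ref{4.8})$, and close with Lemmas \ref{lemma4.6}--\ref{lemma4.8} and the $\|F_x\|^2$ bound. The paper in fact omits the proof of Lemma \ref{lemma4.9} as routine, and your write-up supplies precisely the details that were left implicit, including the correct handling of the $\delta$-feedback from Lemma \ref{lemma4.8}.
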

Finally, Proposition \ref{prposition3.1} is obtained by Lemma \ref{lemma4.5}-Lemma \ref{lemma4.9}.

\section{Proof of Theorem \ref{theorem} }\label{Sec.5}

Now we turn to the proof of main theorem, i.e., Theorem \ref{theorem}. It is straightforward to imply (\ref{2.11}) from Lemma \ref{lemma 3.2}.
It remains to show  (\ref{2.12}). We will use  the following useful lemma.
\begin{lemma}(\cite{mn1985})\label{lemma5.1}
 Assume that the function $f(t) \geq 0\in L^1(0, +\infty) \cap  BV(0, +\infty) $. Then it holds that $f(t) \rightarrow0$ as $t \rightarrow \infty$.
 \end{lemma}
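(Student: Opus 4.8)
The plan is to extract the two hypotheses separately: the bounded-variation assumption forces the existence of a finite limit $L := \lim_{t\to+\infty} f(t)$, while integrability together with nonnegativity pins down $L=0$. Since the lemma is quoted from \cite{mn1985}, one could simply cite that reference, but the self-contained argument below is short enough to record directly.

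First I would establish that the limit exists. Denote by $\mathrm{TV}(f;(a,b))$ the total variation of $f$ on $(a,b)$. Since $f\in BV(0,+\infty)$, the quantity $\mathrm{TV}(f;(0,+\infty))$ is finite, and this has two immediate consequences. On the one hand $f$ is bounded, because $|f(t)-f(0)|\le \mathrm{TV}(f;(0,+\infty))$ for every $t$. On the other hand, by additivity of the variation over intervals, $\mathrm{TV}(f;(0,T))\nearrow \mathrm{TV}(f;(0,+\infty))$ as $T\to+\infty$, so the tail variation satisfies $\mathrm{TV}(f;(T,+\infty))\to 0$. Hence, given $\varepsilon>0$, I would choose $T$ with $\mathrm{TV}(f;(T,+\infty))<\varepsilon$; then for all $s,t>T$ one has $|f(t)-f(s)|\le \mathrm{TV}(f;(T,+\infty))<\varepsilon$. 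This is precisely the Cauchy criterion at infinity, so $f(t)$ converges to some finite $L$ as $t\to+\infty$. Equivalently, one may decompose the $BV$ function $f$ as the difference of two bounded nondecreasing functions, each of which has a limit at infinity.

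Next I would identify the limit. Since $f\ge 0$, necessarily $L\ge 0$. Suppose for contradiction that $L>0$. Then there is $T_0$ with $f(t)\ge L/2$ for all $t\ge T_0$, whence $\int_{T_0}^{+\infty} f(t)\,\mathrm{d}t=+\infty$, contradicting $f\in L^1(0,+\infty)$. Therefore $L=0$, that is, $f(t)\to 0$ as $t\to+\infty$, which is the assertion.

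I expect the only delicate point to be the rigorous passage from the $BV$ hypothesis to the existence of the limit; once the tail variation is shown to vanish (equivalently, once the monotone decomposition is invoked), the remaining step is the elementary observation that a nonnegative integrable function cannot tend to a strictly positive limit. Note that neither uniform continuity nor differentiability of $f$ is needed, which is exactly why the $BV$ hypothesis is the natural replacement for the continuity assumption in the classical Barbalat lemma.
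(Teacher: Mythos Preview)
Your argument is correct: the $BV$ hypothesis gives vanishing tail variation and hence a finite limit $L$ via the Cauchy criterion (or, equivalently, via the monotone decomposition), and nonnegativity together with integrability forces $L=0$. The paper itself does not supply a proof of this lemma; it merely quotes it from \cite{mn1985} and uses it as a black box in the proof of Theorem~\ref{theorem}. Your self-contained proof is therefore a genuine addition rather than a deviation from the paper's approach.
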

\begin{proof}  {\bf (Proof of Theorem \ref{theorem}.)}\,
  Differentiating the first equation of (\ref{3.3}) with respect to $x$, multiplying the
 resulting equation by $\phi_{x}$,  and integrating  on $(0,\infty)$, we have
 \begin{equation*}
  \left|\frac{\operatorname{ d}}{\operatorname{ d}t}\left(\|\phi_{x}\|^{2}\right)\right|\leq C(\|\phi_{x}\|^{2} +\|\psi_{xx}\|^{2})  .
 \end{equation*}
 Using Lemma \ref{lemma 3.2}, we have
 \begin{equation*}
 \int_{0}^{\infty} \left|\frac{\operatorname{ d}}{\operatorname{ d}t}\left(\|\phi_{x}\|^{2}\right)\right| \operatorname{ dt} \leq C\left\{\left\|\left(\phi_{0}, \psi_{0}\right)\right\|_{2}^{2}+e^{-c_{-} \beta}\right\}\leq C,
 \end{equation*}
which implies $\|\phi_{ x}\|^{2}\in L^1(0, +\infty) \cap  BV(0, +\infty)$. By Lemma \ref{lemma5.1}, we have
 \begin{equation*}
    \|\phi_{ x}\|\rightarrow0 \quad   \text{as} \quad   t\rightarrow+\infty.
 \end{equation*}
Since $\|\phi_{xx}\|$ is bounded, the Sobolev inequality implies that
 \begin{eqnarray*}
 \|{}{v}-V\|_{\infty}^{2}=\|\phi_{x}\|_{\infty}^{2} \leq 2\| \phi_{x}(t)\|_{}   \| \phi_{xx}(t)\|_{} \rightarrow  0.
\end{eqnarray*}
 Similarly, we have
 \begin{eqnarray*}
 \|{}{u}-U\|_{\infty}^{2}=\|\psi_{x}\|_{\infty}^{2} \leq 2\| \psi_{x}(t)\|_{}   \| \psi_{xx}(t)\|_{} \rightarrow  0.
\end{eqnarray*}
Therefore, the proof of Theorem \ref{theorem} is completed.
\end{proof}

          \end{document}